\newtheorem{lemma}{Lemma}[section]
\newtheorem{theorem}{Theorem}[section]
\theoremstyle{definition}
\newtheorem{definition}{Definition}[section]
\theoremstyle{remark}
\newtheorem{remark}{Remark}[section]
\numberwithin{equation}{section}
\newcommand{\p}{\partial}
\newcommand{\bfn}{\mathbf{n}}
\newcommand{\bfu}{\mathbf{u}}
\newcommand{\bfr}{\mathbf{r}}
\newcommand{\norm}[1]{\left\Vert#1\right\Vert}
\newcommand{\dd}{\mathrm{d}}
\newcommand{\TV}{\mathrm{T.V.}}
\newcommand{\rmnum}[1]{\romannumeral #1}
\newcommand{\Rmnum}[1]{\expandafter\@slowromancap\romannumeral#1@}
\begin{document}
\title[Stability of Transonic Characteristic Discontinuities]
{Stability of Transonic Characteristic Discontinuities in Two-Dimensional
Steady Compressible Euler Flows}

\author{Gui-Qiang Chen}
\author{Vaibhav Kukreja}
\author{Hairong Yuan}

\address{Gui-Qiang G. Chen, School of Mathematical Sciences, Fudan University,
 Shanghai 200433, China; Mathematical Institute, University of Oxford,
         Oxford, OX1 3LB, UK; Department of Mathematics, Northwestern University,
         Evanston, IL 60208, USA}
\email{\tt chengq@maths.ox.ac.uk}

\address{Vaibhav Kukreja, Department of Mathematics, Northwestern University,
         Evanston, IL 60208, USA}
         \email{\tt vkukreja@math.northwestern.edu}

\address{Hairong Yuan,
Department of Mathematics, East China Normal University, Shanghai
200241, China} \email{hryuan@math.ecnu.edu.cn;\
hairongyuan0110@gmail.com}

\keywords{transonic, characteristic discontinuities, vortex sheet, entropy
wave, stability, steady flow, compressible Euler equations, front tracking}
\subjclass[2000]{35M33, 35L50, 35Q31; 76H05, 76N10, 76N15}
\date{\today}

\begin{abstract}
For a two-dimensional steady supersonic Euler flow past a
convex cornered wall with right angle,
a characteristic discontinuity (vortex sheet and/or entropy wave) is generated, which
separates the supersonic flow from the gas at rest (hence subsonic).
We proved that such a transonic characteristic discontinuity is structurally stable
under small perturbations of the
upstream supersonic flow in $BV$.
The existence of a weak entropy solution and Lipschitz continuous
free boundary (i.e. characteristic discontinuity) is established.
To achieve this, the problem is formulated as a free boundary
problem for a nonstrictly hyperbolic system of conservation laws; and
the free boundary problem is then solved by analyzing nonlinear wave interactions
and employing the front tracking method.
\end{abstract}

\maketitle

%%%----------------------------------------------------------------------------
%\section{Introduction}

\section{Introduction and Main Theorem}

We are concerned with the structural stability of
transonic characteristic discontinuities in two-dimensional steady full
compressible Euler flows, which separate supersonic flows from the static
gases (that is, flows with zero-velocity, hence subsonic, cf. Figure
\ref{fig1}) under small perturbations in the space of functions of bounded
variation of the upstream supersonic flow.
The flow is governed by the two-dimensional full Euler system,
consisting of the conservation laws of mass, momentum, and energy:
\begin{eqnarray}\label{euler}
\begin{cases}\p_x(\rho u)+\p_y(\rho v)=0,\\
\p_x(\rho u^2+p)+\p_y(\rho uv)=0,\\
\p_x(\rho uv)+\p_y(\rho v^2+p)=0,\\
\p_x(\rho u(E+\frac{p}{\rho}))+\p_y(\rho v(E+\frac{p}{\rho}))=0.
\end{cases}
\end{eqnarray}
As usual, the unknowns $\bfu=(u,v), p$, and $\rho$ are respectively
the velocity, the pressure, and the density of the flow, and
$$
E=\frac{1}{2}(u^2+v^2)+e(p,\rho)
$$
is the total energy per unit mass with the internal energy $e(p,\rho)$.
Let $S$
be the entropy. For polytropic gas, the constitutive relations are
$$
p=\kappa \rho^\gamma\exp(\frac{S}{c_\nu}), \qquad
e=\frac{(\gamma-1)p}{\rho}
$$
for some positive constants
$\kappa, c_\nu$, and $\gamma>1$.
The sonic speed is given by
$$
c=\sqrt{\frac{\gamma p}{\rho}}.
$$
The flow is said to be {\it supersonic}
(resp. {\it subsonic}) at a state point if $u^2+v^2>c^2$ (resp.
$u^2+v^2<c^2$) there. It is well-known that the Euler
system \eqref{euler} is hyperbolic for supersonic flow, and
particularly hyperbolic in the positive $x$-direction if $u>c$;
while it is of hyperbolic-elliptic composite-mixed type if the flow
is subsonic. Hereafter, we use $U=(u,v,p,\rho)$ to represent
the state of the flow under consideration.

An important physical case in which a characteristic discontinuity is generated
is as follows: the characteristic discontinuity is a straight line emerging
from a corner $O$ (that is the positive $x$-axis); the gas flow above
(i.e., in $\{x\in\mathbb{R}, y>0\}$) is a uniform supersonic flow
with the velocity $(\underline{u},0)$, pressure
$\underline{p}$, and density $\underline{\rho}^+$ such that
 $\underline{u}>\underline{c}^+$ for the sonic speed $\underline{c}^+>0$;
below the
characteristic discontinuity (i.e., in $\{x>0, y<0\}$), the gas is at rest
with zero-velocity,  pressure $\underline{p}$, and density
$\underline{\rho}^-$.
The question is whether such a transonic characteristic
discontinuity is structurally stable under small
perturbations of the upstream supersonic flow in the framework of
two-dimensional steady full Euler equations, as shown in Figure \ref{fig1}.
Notice that the characteristic discontinuity is either a combination of
a vortex sheet and an entropy wave or one of them.

For related cases, when the flows on both sides of the characteristic
discontinuity are supersonic, it has been shown to be structurally
stable by Chen-Zhang-Zhu \cite{Chen-Zhang-Zhu-2006} in the
framework of weak entropy solutions, and the $L^1$--stability
also holds as established by Chen-Kukreja \cite{Chen-Kukreja};
when the flow is in an infinite duct and on
both sides of the characteristic discontinuity the flows are subsonic,
Bae \cite{Bae-2011} proved that it is stable under small perturbations of the
walls of the duct. Characteristic discontinuities appear ubiquitously
in Mach reflection and refraction/reflection of shock upon an
interface.
For such problems, Chen
\cite{Chen2006} and Chen-Fang \cite{Chen-Fang-2008} studied the
stability of subsonic characteristic discontinuities; Fang-Wang-Yuan
\cite{Fang-Wang-Yuan2011} showed the local stability of supersonic
characteristic discontinuity in the framework of classical solutions.
Also see Zhang \cite{ZhangYQ} for supersonic potential flows past
a convex cornered bending wall and related geometry.
As far as we know, there have been no results available so far
concerning transonic
characteristic discontinuities when the supersonic flows
are not $C^1$ but only
belong to the space of functions of bounded variation.

We remark that considerable progress has been made on the existence and stability
of multidimensional transonic shocks in steady full Euler flows (see, for example,
\cite{Chen-Chen-Feldman2007,Chen2006,Liu-Yuan2008,Xin-Yan-Yin2009,Yuan2006}; also cf. \cite{Da}).
In these papers, the smooth supersonic
flow is given, and the key point is to solve a one-phase elliptic free
boundary problem. However, in order to solve the perturbed
characteristic discontinuity in this paper, the key point is to solve a
hyperbolic free boundary problem in the framework of weak entropy
solutions.

\begin{figure}[h]\label{fig1}
\centering
  \setlength{\unitlength}{1bp}%
  \begin{picture}(300, 200)(40,50)
  \put(0,0){\includegraphics[scale=0.70]{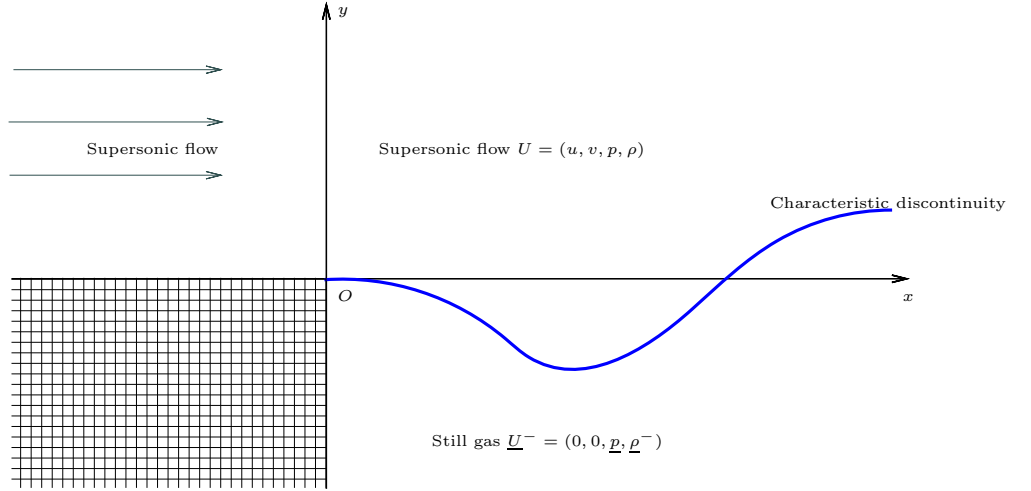}}

   \put(400,115){\fontsize{6}{5}\selectfont ${\tiny x}$}
   \put(187,223){\fontsize{6}{5}\selectfont ${\tiny y}$}
     \put(187,115){\fontsize{6}{5}\selectfont ${\tiny O}$}
\put(90,170){\fontsize{6}{5}\selectfont { Supersonic flow}}
\put(200,170){\fontsize{6}{5}\selectfont { Supersonic flow
$U=(u,v,p,\rho)$}} \put(220,60){\fontsize{6}{5}\selectfont { Still
gas $\underline{U}^-=(0,0,\underline{p},\underline{\rho}^-)$}}
\put(350,150){\fontsize{6}{5}\selectfont {Characteristic discontinuity}}
\end{picture}
\caption{\small   A characteristic discontinuity emerged from the corner
$O$ that separates the static gas with zero-velocity below from the supersonic
flow above. }
\end{figure}

In the following, we first formulate the aforementioned stability problem for
the characteristic discontinuity as a free boundary problem for the Euler
equations. Then, in Sections 2--5, we establish the existence and
stability of the free boundary, by a front
tracking method (cf. \cite{Bressan2000,Da,Holden-Risebro2002}).

To this end, we now introduce {\it characteristic discontinuities}, a kind of
discontinuities that separate piecewise classical/weak solutions of
\eqref{euler}. Suppose that $\Gamma$ is a Lipschitz curve with normal
$\bfn=(n_1,n_2)$ in the plane, and the flows $U=(u,v,p,\rho)$ on both
sides of $\Gamma$ satisfy the Euler equations \eqref{euler} in the
classical/weak sense. Then $U$ is a weak solution to \eqref{euler}
provided it satisfies \eqref{euler} on either side of $\Gamma$ in the
classical/weak sense, and the following Rankine-Hugoniot jump
conditions hold along $\Gamma$:
\begin{eqnarray}\label{rh}\begin{cases}
[\rho u]n_1+[\rho v]n_2 =0,\\
[\rho u^2+p] n_1+ [\rho uv]n_2 =0,\\
[\rho uv]n_1+[\rho v^2+p]n_2 =0,\\
[\rho u(E+\frac{p}{\rho})]n_1+[\rho v(E+\frac{p}{\rho})]n_2 =0,\end{cases}
\end{eqnarray}
where $[\cdot]$ denotes the jump of the quantity across $\Gamma$.
Such a
discontinuity $\Gamma$ is called a {\it characteristic discontinuity} if the
mass flux  $m=\rho \bfu\cdot\bfn=(\rho u)n_1+(\rho v)n_2$ through
$\Gamma$ is zero.
For a characteristic discontinuity, the first and fourth
condition ($[\rho\bfu\cdot\bfn (E+\frac{p}{\rho})]=0$) in \eqref{rh} hold
trivially, while the second ($[u\rho\bfu\cdot\bfn]+[p]n_1=0$) and
the third  ($[v\rho\bfu\cdot\bfn]+[p]n_2=0$) imply $[p]=0$.
Thus, we see that, for a characteristic discontinuity,
the only jump conditions
should be
\begin{eqnarray}\label{rh2}
[p]=0\qquad\text{and}\qquad \bfu\cdot\bfn=0.
\end{eqnarray}
This implies that there might be jumps of the tangential velocity and the
entropy (i.e., the density). Therefore, in general, a characteristic discontinuity
in full Euler flow is either a vortex sheet or an entropy wave.
We also note that \eqref{rh2} implies \eqref{rh}.

Consider the Cauchy problem of the
hyperbolic-elliptic composite-mixed system \eqref{euler}:
\begin{eqnarray}\label{prob1}
\begin{cases}
\eqref{euler} \qquad\text{in} \ \ x\ge0,\ y\in\mathbb{R},\\
U=\begin{cases}
U_0, & x=0,\ y>0,\\
\underline{U}^-, &x=0,\ y<0.
\end{cases}
\end{cases}
\end{eqnarray}
The discontinuous function:
$$
U=\begin{cases}\underline{U}^+=(\underline{u}, 0, \underline{p},
\underline{\rho}^+), &x>0,\ y>0,\\
\underline{U}^-=(0,0,\underline{p},\underline{\rho}^-), & x>0,\ y<0,
\end{cases}
$$
with
$\underline{u}>\underline{c}^+=\sqrt{\gamma\underline{p}/\underline{\rho}^+}$
is a characteristic discontinuity of  \eqref{euler},
when $U_0=\underline{U}^+$, and $\underline{U}^-$ is the state of
the static gas below $\{x>0,\ y=0\}$.

A weak entropy solution to problem \eqref{prob1} can be defined
in the standard way (cf. Definition \ref{def11} below):
In particular, it is defined as in \eqref{weak1}--\eqref{weak5}, but
the domain of integration $\Omega$ is replaced by $\{x\ge0,\
y\in\mathbb{R}\}$, $\Sigma$ is replaced by $\{x=0,\
y\in\mathbb{R}\}$, and the right-hand sides of \eqref{weak2}--\eqref{weak3}
are replaced by zero.

We note that the state of the static gas $\underline{U}^-$ should be
unchanged under the perturbation of the supersonic flow. This is a
merit of such a transonic characteristic discontinuity, which enables us to
reduce the above problem to an initial-free boundary problem of the
hyperbolic Euler equations.

Suppose that the characteristic discontinuity $\Gamma$ is given by the equation:
$$
y=g(x) \quad\mbox{for}\,\,\, x\ge0,
$$
with $g(0)=0$. Then
$$
\bfn=\frac{(g'(x),-1)}{\sqrt{1+(g'(x))^2}}.
$$
The domain bounded by $\Gamma$ and
$\Sigma=\{(x,y)\,:\, x=0, y>0\}$ is written as $\Omega$.
We formulate the following free boundary problem of
\eqref{euler} in $\Omega$:
\begin{eqnarray}\label{prob}
\begin{cases}
U=U_0 &\text{on}\ \ \Sigma,\\
p=\underline{p} &\text{on}\ \ \Gamma,\\
v=g'(x)u&\text{on}\ \ \Gamma,
\end{cases}
\end{eqnarray}
where the first is the initial data and the last two conditions on $\Gamma$
come from \eqref{rh2}.

\begin{definition}\label{def11}
A pair $(g,U)$ with $y=g(x)\in \rm{Lip}([0,\infty);\mathbb{R})$  and
$U=(u,v,p,\rho)\in L^\infty(\Omega;\mathbb{R}^4)$ is called a {\it
weak entropy solution} to problem \eqref{prob} provided the
following hold:
\begin{itemize}
\item[$\diamondsuit$] $U$ is a weak solution to \eqref{euler} in
$\Omega$ and satisfies the initial-boundary conditions in
the trace sense: For any $\phi\in C_0^\infty(\mathbb{R}^2)$,
\begin{eqnarray}
&&\int_{\Omega}\big(\rho u\p_x\phi+\rho v \p_y\phi\big)\,\dd x\dd
y+\int_\Sigma \rho u\phi\,\dd y=0, \label{weak1}\\
&&\int_{\Omega}\big((\rho u^2+p)\p_x\phi+\rho uv \p_y\phi\big)\,\dd x\dd
y+\int_\Sigma (\rho u^2+p)\phi\,\dd y
=\underline{p}\int_\Gamma\phi
n_1\,\dd s, \label{weak2}\\
&&\int_{\Omega}\big((\rho uv)\p_x\phi+(\rho v^2+p) \p_y\phi\big)\,\dd x\dd
y+\int_\Sigma (\rho uv)\phi\,\dd y=\underline{p}\int_\Gamma\phi n_2\,\dd
s, \label{weak3}\\
&&\int_{\Omega}\big(\rho u (E+\frac{p}{\rho})\p_x\phi+\rho v
(E+\frac{p}{\rho})\p_y\phi)\,\dd x\dd
y+\int_\Sigma \rho
u(E+\frac{p}{\rho})\phi\,\dd y=0;\label{weak4}
\end{eqnarray}

\item[$\diamondsuit$] $U$ satisfies the entropy inequality, i.e., the
steady Clausius inequality:
\begin{eqnarray*}
\p_x(\rho u S)+\p_y(\rho vS)\ge0
\end{eqnarray*}
in the sense of distribution in $\Omega$: For any $\phi\in
C_0^\infty(\mathbb{R}^2)$ with $\phi\ge0$:
\begin{eqnarray}\label{weak5}
\int_{\Omega}\big(\rho u S \p_x\phi+\rho v S\p_y\phi\big)\,\dd x\dd
y+\int_\Sigma \rho uS\phi\,\dd y\le0.
\end{eqnarray}
\end{itemize}
\end{definition}

We remark that, if $(g, U)$ is a weak entropy solution to problem
\eqref{prob}, then
$$\tilde{U}=\begin{cases}
U & \text{in}\ \ \{y>g(x),\ x\ge0\},\\
\underline{U}^- &\text{in}\ \  \{y<g(x), \ x\ge0\}
\end{cases}$$
 is a weak entropy solution to problem \eqref{prob1}.
This can be checked by integration by parts in
$\{x\ge0, y<g(x)\}$; thus, we omit the details.
From now on, we focus
on the solution of problem \eqref{prob}. The main result of this
paper is the following.

\begin{theorem}\label{thm1}
There exists positive constants $\varepsilon$ and $C$ depending only on
$\underline{U}^\pm$ so that, if
$$
\norm{U_0-\underline{U}^+}_{\rm{BV}(\Gamma)}\le\varepsilon,
$$
then
problem \eqref{prob} has a  weak entropy solution $(g,U)$.
Moreover, the
solution satisfies
\begin{itemize}
\item[(\rmnum{1})] $g\in \rm{Lip}([0,\infty);\mathbb{R})$  with
$g(0)=0$ and $\norm{g'}_{L^\infty[0,\infty)}\le C\varepsilon;$

\item[(\rmnum{2})] There exists  $\underline{U}_0\in\mathbb{R}^4$ so that
$$
U-\underline{U}_0\in C([0,\infty);L^1(g(x),\infty)), \qquad
\norm{(U-\underline{U}^+)(x,\cdot)}_{\rm{BV}([g(x),\infty))}\le
C\varepsilon.
$$
\end{itemize}
\end{theorem}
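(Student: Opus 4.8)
The plan is to solve the free boundary problem \eqref{prob} as an initial-boundary value problem for the Euler system with the streamwise variable $x$ playing the role of the evolution ``time,'' and to construct the solution by a front tracking scheme adapted to the moving boundary $\Gamma$. Since $\underline u>\underline c^+$, the flow remains supersonic for small $\varepsilon$, so \eqref{euler} stays hyperbolic in the positive $x$-direction on a neighborhood of $\underline U^+$; I would first recast \eqref{euler} in conservative form $\p_x W(U)+\p_y H(U)=0$ and compute its eigenvalues, which at the background state $\underline U^+$ (where $v=0$) are $\lambda_1<0<\lambda_4$ together with the double eigenvalue $\lambda_2=\lambda_3=0$. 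Thus the $1$- and $4$-families are the genuinely nonlinear acoustic modes (each obeying a Lax condition), while the coincident $2$- and $3$-families are the linearly degenerate entropy wave and vortex sheet; the free boundary $\Gamma$ is itself a front of this degenerate middle family, separating the supersonic flow from the static gas. The reduction recorded just before Definition \ref{def11} lets me confine all analysis to $\Omega=\{y>g(x)\}$, using only the two scalar conditions $p=\underline p$ and $v=g'(x)u$ on $\Gamma$.

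Next I would resolve the building-block Riemann problems. In the interior this is the standard Riemann problem for two-dimensional steady Euler, whose solvability near $\underline U^+$ (despite the coincidence $\lambda_2=\lambda_3$) is classical; I would parametrize the four wave curves by suitable strengths and record the Riemann invariants adapted to the degenerate middle field. The genuinely new ingredient is the boundary Riemann problem on $\Gamma$: given an incoming $1$-front (which travels toward $\Gamma$ since $\lambda_1<0$) and the constraint $p=\underline p$, I must emit the reflected outgoing $4$-front (since $\lambda_4>0$ carries it away from $\Gamma$) and update the slope via $g'=v/u$. The crucial quantity is the reflection coefficient $K_b$ relating the reflected $4$-strength to the incident $1$-strength, together with the induced variation of $g'$, which must be bounded by the strength of waves striking $\Gamma$. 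Because the gas below is at rest and the pressure is prescribed, I expect $|K_b|$ to be controlled by a constant depending only on $\underline U^\pm$, and this is precisely what renders the transonic interface stable.

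With these blocks in hand I would run the front tracking algorithm: approximate $U_0$ by piecewise constant data, resolve every interior and boundary Riemann problem (replacing rarefactions by fans of small fronts), and track the interactions of acoustic fronts, contact fronts, and the boundary front $\Gamma$. The heart of the argument is the interaction estimates and a Glimm-type functional $F(x)=V(x)+K\,Q(x)$, where $V(x)$ measures the weighted total strength of all fronts in $\{x\}\times(g(x),\infty)$ and $Q(x)$ is the interaction potential, each augmented by boundary terms accounting for reflections at $\Gamma$. Combining the interior interaction estimates (which are mild, since the middle fields are linearly degenerate and stay separated from $\lambda_1,\lambda_4$) with the boundary reflection estimate, I would choose the weights and the coupling constant $K$ so that $F$ is non-increasing in $x$. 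This yields the uniform bounds $\norm{(U-\underline U^+)(x,\cdot)}_{\BV([g(x),\infty))}\le C\varepsilon$ and, via $g'=v/u$ with $v$ small, $\norm{g'}_{L^\infty[0,\infty)}\le C\varepsilon$.

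Finally I would pass to the limit as the mesh parameter tends to zero: the uniform BV bound and Helly's theorem furnish a convergent subsequence with limit $(g,U)$, and a standard consistency argument shows $U$ is a weak solution satisfying \eqref{weak1}--\eqref{weak4} and the entropy inequality \eqref{weak5}, i.e. a weak entropy solution in the sense of Definition \ref{def11}. The $L^1$-continuity of $U-\underline U_0$ in $x$ (with $\underline U_0$ the far-field limit of the initial data as $y\to\infty$) and the Lipschitz regularity of $g$ then follow from the BV bounds. The main obstacle I anticipate is the boundary reflection estimate: one must verify that the reflection coefficient $K_b$ at the constant-pressure free boundary is small, or at least bounded in the correct weighted norm, so that repeated reflections do not amplify the total variation, and simultaneously that $g'$ inherits a BV bound from the waves hitting $\Gamma$. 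Carrying this out consistently with the nonstrict hyperbolicity of the coincident entropy/vortex-sheet fields---so that the weights in $V$ and the coupling constant $K$ can be chosen compatibly---is the technical core of the proof.
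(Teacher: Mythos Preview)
Your proposal is correct and follows essentially the same route as the paper: recast \eqref{euler} in conservative form, solve the interior and free-boundary Riemann problems, run a front tracking scheme, control total variation via a weighted Glimm functional $V+\kappa Q$, and pass to the limit by compactness. One point worth sharpening: the reflection coefficient $K_2$ satisfies $|K_2|=1$ at the background state $\underline U^+$ (and can exceed $1$ when $v_l<0$), so it is \emph{not} small; the paper handles this exactly as you anticipate, by weighting the $1$-family strengths with a factor $k_+>|K_2|$ in $V$ (rather than adding separate boundary terms), which makes $V$ strictly decrease at each reflection and allows $\kappa$ to be chosen afterwards.
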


\begin{remark}\label{remark1}
We note that
$\norm{U_0-\underline{U}^+}_{\rm{BV}(\Sigma)}\le\varepsilon$ implies
that $\lim_{y\to\infty}(U_0-\underline{U}^+)(y)$ exists. Then
there exists $\underline{U}_0\in\mathbb{R}^4$ as claimed in Theorem
\ref{thm1} so that
\begin{eqnarray*}
\lim_{y\to\infty}U_0(y)=\underline{U}_0,
\end{eqnarray*}
and
\begin{equation*}
|\underline{U}_0-\underline{U}^+|\le\varepsilon.
\end{equation*}
\end{remark}

To prove Theorem \ref{thm1}, we establish the compactness and convergence of approximate free boundaries
to the free boundary of the exact solution in supersonic-subsonic flows in the framework of
front tracking method, while some other essential tools/notions of the front tracking method
are extended, modified, and further clarified working in the presence
of the free boundary such as a generation of fronts to control the finiteness of physical fronts and
the errors from approximate Riemann solvers for the nonstrictly hyperbolic free boundary problem.
For this, two new nonlinear Riemann problems are involved:
One is the Riemann problem at the convex corner connected with the still gas state (subsonic state);
and the other is the Riemann problem determining the evolution of the free boundary, for which we establish
the boundedness of the key reflection coefficient of the reflected wave into the supersonic region after
the interaction of the incident wave with the free boundary.
To achieve the compactness, we have to identify the right scales and global weights to control
the Glimm functional to make it monotonically decrease in the flow direction,
while preserving the overall structural stability of the characteristic boundary as
the hyperbolic region evolves in complicated ways under any small BV perturbation
yet the subsonic state remains stable beneath the free boundary.

We also remark in passing that, as an example of one-phase hyperbolic free boundary
problems for nonstrictly hyperbolic systems,
we deal with the problem
in the physical space, the Euler coordinates throughout this paper.
This represents a first example of an approach to apply the front-tracking method
to study structural stability of interfaces between different mediums, one of them is subsonic.
Our approach offers further opportunities to initiate the study of vortex sheets/entropy waves
in the space of bounded variation in nozzles, jets, etc. for mixed-type flows, transonic flows.
In a forthcoming paper, we will deal with
this problem and related $L^1$-stability in a different approach.

The rest of this paper is devoted to establishing Theorem \ref{thm1}. We
will mainly employ a version of the front tracking method introduced
in Holden-Risebro \cite{Holden-Risebro2002} for convenience to deal with the problem.
Thus, in Section 2, we review some facts
concerning the solvability of various Riemann problems for the steady Euler
equations, and present some essential interaction estimates. It
manifests clearly in the simplest case how such a hyperbolic free
boundary problem can be solved. Then, in Section 3, we construct
approximate solutions by the front tracking algorithm. The key point
is to show such an approximate solution can be established for
$x\in[0,\infty)$ by constructing a Glimm functional. Then, in Section
4, with the uniform $BV$ estimate of
approximate solutions obtained from the Glimm functional, we establish
the compactness of the family of approximate solutions and that the
limit is actually an entropy solution. Finally, we discuss the
asymptotic behavior of the weak entropy solutions as $x\to\infty$ in
Section 5.

\section{Riemann problems and interaction estimates}
\label{sec2}
In this section we first review certain basic properties of the steady
hyperbolic Euler equations \eqref{euler} that are used later for self-containedness
(cf. Chen-Zhang-Zhu \cite[pp.1665-1670]{Chen-Zhang-Zhu-2006}).
Then we show the solvability of ``free boundary"
Riemann problem and interaction estimate between weak waves and the
free boundary, which are the \textit{new} ingredients in this paper.

\subsection{Euler Equations}\label{sec21}
As in \cite{Chen-Zhang-Zhu-2006},
we write the Euler equations
\eqref{euler} in the form
\begin{eqnarray}\label{eq21}
\p_xW(U)+\p_yH(U)=0, \qquad U=(u,v,p,\rho),
\end{eqnarray}
where
$$
W(U)=(\rho u, \rho u^2+p,\rho uv,\rho
u(\frac{\gamma p}{(\gamma-1)\rho}+\frac{u^2+v^2}{2}))^\top
$$
and
$$
H(U)=(\rho v,\rho uv, \rho v^2+p, \rho v(\frac{\gamma
p}{(\gamma-1)\rho}+\frac{u^2+v^2}{2}))^\top.
$$

The eigenvalues $\lambda$
of this system are determined by
$\det(\lambda\nabla_UW(U)-\nabla_UH(U))=0$, or explicitly,
$$
(v-\lambda u)^2\big((v-\lambda u)^2-c^2(1+\lambda^2)\big)=0.
$$
Thus, if $u>c$, we have four real eigenvalues:
\begin{eqnarray}\label{2.2}
\lambda_j=\frac{uv+(-1)^j\sqrt{u^2+v^2-c^2}}{u^2-c^2},\quad
j=1,4;\qquad\;\; \lambda_k=\frac{v}{u},\quad k=2,3.
\end{eqnarray}
The associated linearly independent right-eigenvectors are
\begin{eqnarray}
&&\bfr_j=\kappa_j(-\lambda_j, 1,\rho(\lambda_ju-v),
\frac{\rho(\lambda_ju-v)}{c^2})^\top,\qquad j=1,4; \\
&&\bfr_2=(u,v,0,0)^\top,\qquad \bfr_3=(0,0,0,\rho)^\top,
\end{eqnarray}
where $\kappa_j$ are renormalized factors so that
$\bfr_j\cdot\nabla_U\lambda_j(U)\equiv1$ since
the $j$-th characteristic fields are genuinely nonlinear,
$j=1,4$. While the second and third characteristic fields are
linearly degenerate: $\bfr_j \cdot \nabla_U\lambda_j(U)\equiv0, j=2,3$.
Although the steady Euler system is not strictly hyperbolic, we
can still employ the general ideas presented in
\cite{Da,Holden-Risebro2002} to treat related Riemann and Cauchy
problems.
The only difference is that, although the characteristic discontinuity
has only one front in physical space (since two of the four characteristic eigenvalues coincide),
we need two independent parameters (one corresponds to $\lambda_2$ for the vortex sheet, and the
other to $\lambda_3$ for the entropy wave) to represent its
strength.

At the unperturbed reference state $\underline{U}^+=(\underline{u},
0, \underline{p},\underline{\rho}^+)$, we easily see that
\begin{eqnarray*}
\lambda_1(\underline{U}^+)<\lambda_2(\underline{U}^+)=0=\lambda_3(\underline{U}^+)
<\lambda_4(\underline{U}^+)=-\lambda_1(\underline{U}^+).
\end{eqnarray*}
Also, Lemma 2.3 in \cite{Chen-Zhang-Zhu-2006} indicates that the
re-normalization factors $\kappa_j(U), j=1,4,$ are positive in a
small neighborhood of $\underline{U}^+$.

\subsection{Wave Curves in the Phase Space}\label{sec22}
As shown in \cite{Chen-Zhang-Zhu-2006},
at each state
$U_0=(u_0,v_0,p_0,\rho_0)$ with $u_0>c_0$ in the phase space, there
are four curves in a neighborhood of $U_0$:
\begin{itemize}
\item[$\diamondsuit$]Vortex sheet curve $C_2(U_0): U=(u_0e^{\alpha_2}, v_0e^{\alpha_2}, p_0,\rho_0).$

These are the states $U$ that can be connected to $U_0$ by a vortex
sheet with slope $\frac{v_0}{u_0}$ and strength $\alpha_2\in\mathbb{R}$;

\item[$\diamondsuit$]Entropy wave curve $C_3(U_0): U=(u_0,v_0,p_0,\rho_0e^{\alpha_3}).$

These are the states $U$ that can be connected to $U_0$ by an entropy
wave with slope $\frac{v_0}{u_0}$ and strength $\alpha_3\in\mathbb{R}$.

\item[$\diamondsuit$]Rarefaction wave curve $R_j(U_0)$:
$$
\dd p=c^2\dd\rho, \dd u=-\lambda_j\dd v, \rho(\lambda_j u-v)\dd v=\dd
p\qquad \mbox{for $\rho<\rho_0, u>c, \ \ j=1,4.$}
$$

These are the states $U$ that can be connected to $U_0$ from the lower
by a rarefaction wave of the $j$-th family;

\item[$\diamondsuit$]Shock wave curve $S_j(U_0)$:
$$
[p]=\frac{c_0^2}{b}[\rho],  [u]=-s_j[v], \rho_0
(s_j u_0-v_0) [v]=[p]\qquad \mbox{for $\rho>\rho_0, u>c, \ \ j=1,4$.}
$$

These are the states $U$ that can be connected to $U_0$ from the lower
 by a shock wave of the $j$-th family, with the slope of the
discontinuity to be
$$
s_j=\frac{u_0v_0+(-1)^j\bar{c}\sqrt{u_0^2+v_0^2-\bar{c}^2}}{u_0^2-\bar{c}^2},
\qquad j=1,4,
$$
where $\bar{c}=\frac{\rho c_0^2}{\rho_0 b}$ and
$b=\frac{\gamma+1}{2}-\frac{\gamma-1}{2}\frac{\rho}{\rho_0}$.
\end{itemize}

One can also parameterize $R_j(U_0)$ and $S_j(U_0)$ ($j=1,4$) so
that there is a curve given by a $C^2$ map $\alpha_j\mapsto
\Phi_j(\alpha_j;U_0)$ in a neighborhood of $U_0$, with
$\alpha_j\ge0$ being the part of $R_j(U_0)$, and $\alpha_j<0$ the
part of $S_j(U_0)$, and
\begin{eqnarray}\label{wavecurve1}
\Phi_j(0;U_0)=U_0,\qquad
\p_{\alpha_j}\Phi_j(0;U_0)=\bfr_j(U_0).
\end{eqnarray}
We can also
write the curve $C_j(U_0)$ $(j=2,3)$ as
$\alpha_j\mapsto\Phi_j(\alpha_j;U_0)$ which is still $C^2$ so that
\eqref{wavecurve1} hold for $j=2,3$.
Since
$\{\bfr_j(U_0)\}_{j=1}^4$ are linearly independent, such curves
consist locally a (curved) coordinate system in a neighborhood of
$U_0$. This guarantees the solvability of the Riemann problems stated below.

For simplicity, we set
\begin{eqnarray}
\Phi(\alpha_4,\alpha_3,\alpha_2,\alpha_1;U_0)=\Phi_4(\alpha_4;\Phi_3(\alpha_3;\Phi_2(\alpha_2;\Phi_1(\alpha_1;U_0)))).
\end{eqnarray}
Then
\begin{eqnarray}
\Phi(0,0,0,0;U_0)=U_0,
\quad\p_{\alpha_j}\Phi(0,0,0,0;U_0)=\bfr_j(U_0), \qquad\; j=1,2,3,4.
\end{eqnarray}

\subsection{Standard Riemann Problem}\label{sec23}
We now consider the standard Riemann problem, that is, system
\eqref{euler} with the piecewise constant (supersonic) initial data
\begin{eqnarray}\label{RP}
U|_{x=x_0}=\begin{cases}
U^+,& \quad y>y_0,\\
U^-,& \quad y<y_0,
\end{cases}
\end{eqnarray}
where $U^+$ and $U^-$ are the constant states which are regarded as the \textit{above}
state and \textit{below} state with respect to the line $y=y_0$, respectively.

\begin{lemma}[Lemma 2.2 in \cite{Chen-Zhang-Zhu-2006}]\label{lem:riemansolver}
There exists $\epsilon>0$ such that, for any states $U^-$ and $U^+$ lie in
the ball $O_\epsilon(U_0)\subset\mathbb{R}^4$ with radius $\epsilon$
and center $U_0$, the above Riemann problem admits a unique
admissible solution consisting of four elementary waves. In
addition, the state $U^+$ can be represented by
\begin{eqnarray}\label{riemsolver}
U^+=\Phi(\alpha_4,\alpha_3,\alpha_2,\alpha_1;U^-).\end{eqnarray}
\end{lemma}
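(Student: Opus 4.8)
The plan is to solve the Riemann problem by the classical Lax construction: realize the \textit{above} state $U^+$ as the image of the \textit{below} state $U^-$ under the composite wave-curve map and invoke the implicit function theorem. Fix the reference state $U_0$ and regard $U^-$ as the base point. For $U^-$ in a small ball around $U_0$, the four $C^2$ curves $\alpha_j\mapsto\Phi_j(\alpha_j;\cdot)$ from Section \ref{sec22} are defined, and I would study the map
$$
\Psi:(\alpha_4,\alpha_3,\alpha_2,\alpha_1)\longmapsto \Phi(\alpha_4,\alpha_3,\alpha_2,\alpha_1;U^-),
$$
which is $C^2$ in $(\alpha_1,\dots,\alpha_4)$ jointly with the base point $U^-$.

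First I would verify that $\Psi$ is a local $C^2$ diffeomorphism near $\alpha=0$. By \eqref{wavecurve1} and the chain rule we already have $\Psi(0)=U^-$ and that the Jacobian $D_\alpha\Psi(0)$ has columns $\p_{\alpha_j}\Phi(0,0,0,0;U^-)=\bfr_j(U^-)$ for $j=1,2,3,4$. Since $\{\bfr_j(U^-)\}_{j=1}^4$ are linearly independent for $U^-$ near $U_0$ (by the explicit eigenstructure in Section \ref{sec21}), the matrix $[\,\bfr_1\,|\,\bfr_2\,|\,\bfr_3\,|\,\bfr_4\,](U^-)$ is nonsingular. The inverse function theorem then yields $\epsilon>0$, which can be chosen uniformly over $U^-\in O_\epsilon(U_0)$, such that every $U^+\in O_\epsilon(U_0)$ is attained by a unique small $(\alpha_4,\alpha_3,\alpha_2,\alpha_1)$. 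This gives both the representation \eqref{riemsolver} and its uniqueness.

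It then remains to assemble these parameters into an admissible solution depending only on $(y-y_0)/(x-x_0)$. Each $\alpha_j$ encodes one elementary wave: $j=1,4$ gives a rarefaction when $\alpha_j\ge0$ and an admissible Lax shock when $\alpha_j<0$ of the genuinely nonlinear family, while $j=2,3$ give the linearly degenerate vortex sheet and entropy wave. Because the unperturbed speeds satisfy $\lambda_1<\lambda_2=\lambda_3<\lambda_4$ and depend continuously on the state, for $\epsilon$ small the four waves emanating from $(x_0,y_0)$ fan out with (weakly) increasing slope and do not overlap, with the $2$- and $3$-waves superposed along the single characteristic-discontinuity front; the patched piecewise-smooth field is thus a genuine weak solution satisfying the entropy inequality.

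The delicate point, and the main departure from the strictly hyperbolic theory, is precisely the coincidence $\lambda_2=\lambda_3$. The hard part will be to confirm that, despite the nonstrict hyperbolicity, the two coincident fields separate into two independent parameters $(\alpha_2,\alpha_3)$ whose wave curves remain $C^2$ and transversal to each other and to $\bfr_1,\bfr_4$ (so that $D_\alpha\Psi(0)$ stays invertible), and that the superposition of the vortex sheet and the entropy wave along a common front is consistent as a single contact discontinuity carrying jumps in both tangential velocity and density. Both are settled by the explicit eigenstructure: $\bfr_2=(u,v,0,0)^\top$ and $\bfr_3=(0,0,0,\rho)^\top$ are manifestly independent and leave $p$ and $\bfu\cdot\bfn$ invariant, so the contact front obeys \eqref{rh2}, and the remaining verifications reduce to the admissibility and interaction estimates already prepared in \cite{Chen-Zhang-Zhu-2006}.
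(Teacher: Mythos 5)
Your proposal is correct and follows exactly the argument the paper relies on: the paper itself only cites Lemma 2.2 of \cite{Chen-Zhang-Zhu-2006} and records in \S\ref{sec22} that, since $\{\bfr_j(U_0)\}_{j=1}^4$ are linearly independent, the composite wave-curve map $\Phi(\alpha_4,\alpha_3,\alpha_2,\alpha_1;\cdot)$ forms a local coordinate system, which is precisely your inverse-function-theorem step. Your additional remarks on assembling the wave fan and on the constant-multiplicity coincidence $\lambda_2=\lambda_3$ (two independent parameters $\alpha_2,\alpha_3$ superposed on a single contact front preserving $p$ and $v/u$) match how the cited reference and this paper treat the nonstrict hyperbolicity, so no genuinely different route is taken.
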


It is noted (cf. Lemma 4.1 in \cite{Chen-Zhang-Zhu-2006})
that one can use the parameters $\alpha_j, j=1, \ldots, 4$, to bound $|U^+-U^-|$:
There is a constant $B$ depending continuously on $U_0$ and
$\epsilon$ so that, for $U^\pm$ connected by \eqref{riemsolver},
$$
\frac{1}{B}\sum_{j=1}^4|\alpha_j|\le|U^+-U^-|\le B\sum_{j=1}^4|\alpha_j|.
$$

For later applications, it is also important to express the Riemann
solver from the upper state $U^+$ to the lower state $U^-$,
rather than the usual way given above. For
$U^+=\Phi_j(\alpha_j;U^-)$, we may have a $C^2$--map
$U^-=\Psi_j(\alpha_j;U^+)$ with $\Psi_j(0;U)=U$ and
$\p_{\alpha_j}\Psi_j(0;U)=-\bfr_j(U)$.
Thus, for
$U^+=\Phi(\alpha_4,\alpha_3,\alpha_2,\alpha_1;U^-)$,
we may express
$U^-$ in terms of $U^+$ by
\begin{eqnarray*}
U^-=\Psi(\alpha_1,\alpha_2,\alpha_3,\alpha_4;U^+)
=\Psi_1(\alpha_1;\Psi_2(\alpha_2;\Psi_3(\alpha_3;\Psi_4(\alpha_4;U^+)))).
\end{eqnarray*}
Then $\Psi(0,0,0,0;U)=U$ and $\p_{\alpha_j}\Psi(0,0,0,0;U)=-\bfr_j(U)$.

\subsection{Free Boundary Riemann Problem}\label{sec24}

We now consider the following Riemann problem of \eqref{euler}
involving a free boundary---a characteristic discontinuity. The initial
data is a constant state $U=U^+$ given on the positive $y$-axis, and
the free boundary is a straight line $y=k x$ with $k\in\mathbb{R}$
to be solved. The  boundary conditions on the free boundary are
$p=\underline{p}$ and $k=\frac{v}{u}$. Since the free boundary -- characteristic
discontinuity --- is of the second/third  characteristic family, the
Riemann solver should contain only one 4-wave with parameter
$\alpha_4$ and a middle constant state $U^\star$; see Figure 2
below.
\begin{figure}[h]\label{fig2}
\centering
  \setlength{\unitlength}{1bp}%
  \begin{picture}(300, 200)(0,0)
  \put(0,0){\includegraphics[scale=0.70]{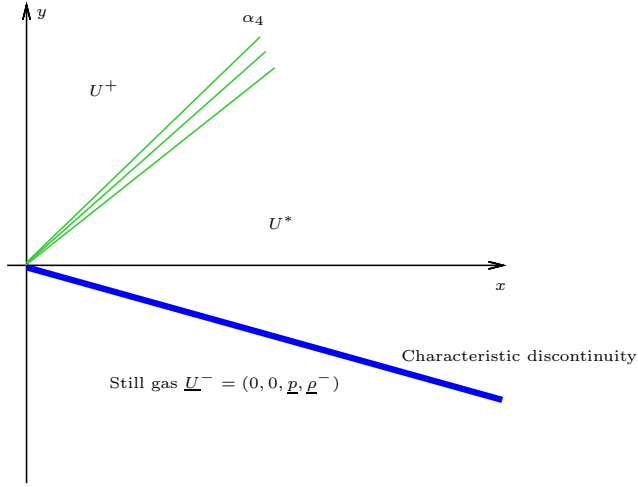}}
   \put(188,77){\fontsize{6}{5}\selectfont ${\tiny x}$}
   \put(15,180){\fontsize{6}{5}\selectfont ${\tiny y}$}
 \put(35,150){\fontsize{6}{5}\selectfont ${\tiny U^+}$}
\put(90,178){\fontsize{6}{5}\selectfont { $\alpha_4$}}
\put(100,100){\fontsize{6}{5}\selectfont { $U^\ast$}}
 \put(40,40){\fontsize{6}{5}\selectfont { Still
gas $\underline{U}^-=(0,0,\underline{p},\underline{\rho}^-)$}}
\put(150,50){\fontsize{6}{5}\selectfont { Characteristic discontinuity}}
  \end{picture}
\caption{\small   A Riemann problem with a free boundary that is a
characteristic discontinuity. }
\end{figure}

\begin{lemma}\label{lem:bounriesolver}
There exists $\epsilon>0$ so that, for $U^+\in
O_\epsilon(\underline{U}^+)$, there is only one admissible solution
consisting of a 4-wave that solves the above free boundary Riemann
problem. The middle state $U^\ast$ can be represented by
$U^\ast=\Psi_4(\alpha_4;U^+)$, and the free boundary is determined by
$k=\frac{v^\ast}{u^\ast}$.
There also holds
\begin{eqnarray}
\alpha_4= K_1(p^+-\underline{p})+M_1|U^+-\underline{U}^+|^2,\qquad
|k|\le K_1'|U^+-\underline{U}^+|,
\end{eqnarray}
with the constants $K_1, K_1'>0$ and a bounded quantity $M_1$ only
depending continuously on $\underline{U}^+$ and $\epsilon$.
\end{lemma}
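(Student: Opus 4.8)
The plan is to reduce the free boundary Riemann problem to a single scalar equation for the wave strength $\alpha_4$ and then invoke the implicit function theorem. Geometrically, the free boundary is the characteristic (second/third family) discontinuity carrying the still gas beneath it, and nothing is to be determined below it; a $1$-wave would have slope $\lambda_1<0<k$ and thus fall into the static region, so the only wave joining the given upper state $U^+$ to the middle state $U^\ast$ is a $4$-wave (whose slope $\lambda_4>0$ exceeds the near-zero free boundary slope). Thus I set $U^\ast=\Psi_4(\alpha_4;U^+)$ and must determine $\alpha_4$ and the slope $k$. First I would write the pressure component of the middle state as
\[
P(\alpha_4;U^+):=p\big(\Psi_4(\alpha_4;U^+)\big),
\]
so that the boundary condition $p=\underline p$ on the characteristic discontinuity becomes the scalar equation $P(\alpha_4;U^+)=\underline p$. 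At $(\alpha_4,U^+)=(0,\underline{U}^+)$ we have $\Psi_4(0;\underline{U}^+)=\underline{U}^+$ and hence $P=\underline p$, so this point solves the equation.

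For existence and uniqueness, I would compute $\p_{\alpha_4}P(0;U^+)=\nabla_Up\cdot\p_{\alpha_4}\Psi_4(0;U^+)=-(\bfr_4)_3(U^+)$, where $(\bfr_4)_3=\kappa_4\rho(\lambda_4u-v)$ is the pressure component of $\bfr_4$. At the reference state this equals $-\kappa_4\underline{\rho}^+\lambda_4\underline u<0$ because $\kappa_4>0$ and $\lambda_4>0$; by continuity it stays nonzero on a small ball. The implicit function theorem then yields a unique $C^1$ function $\alpha_4=\alpha_4(U^+)$ with $\alpha_4(\underline{U}^+)=0$, defined for $U^+\in O_\epsilon(\underline{U}^+)$ with $\epsilon$ small. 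Strict monotonicity of $P$ in $\alpha_4$ gives uniqueness, and admissibility is automatic since the wave-curve parameterization places rarefactions on $\alpha_4\ge0$ and Lax shocks on $\alpha_4<0$. Setting $U^\ast=\Psi_4(\alpha_4;U^+)$ and $k=v^\ast/u^\ast$ completes the construction.

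The quantitative bound on $\alpha_4$ is the step requiring the most care, and it is where the specific form $K_1(p^+-\underline p)+M_1|U^+-\underline{U}^+|^2$ originates. Differentiating the identity $P(\alpha_4(U^+);U^+)\equiv\underline p$ and using $\Psi_4(0;U^+)=U^+$ (so that $\nabla_{U^+}\Psi_4(0;U^+)=I$) together with $\nabla_Up=(0,0,1,0)$, I would evaluate at the reference state to obtain
\[
\nabla_{U^+}\alpha_4\big|_{\underline{U}^+}=-\frac{\nabla_{U^+}P}{\p_{\alpha_4}P}\Big|_{\underline{U}^+}=\frac{(0,0,1,0)}{(\bfr_4)_3(\underline{U}^+)}.
\]
The crucial point is that this gradient points purely in the pressure direction, so the linear part of $\alpha_4$ depends only on $p^+-\underline p$; Taylor expansion about $\underline{U}^+$ then gives $\alpha_4=K_1(p^+-\underline p)+M_1|U^+-\underline{U}^+|^2$ with $K_1=1/(\bfr_4)_3(\underline{U}^+)>0$ and $M_1$ a bounded remainder depending continuously on $\underline{U}^+$ and $\epsilon$.

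Finally, for the slope estimate I would note that $k=v^\ast/u^\ast$ is a smooth function of $U^+$ that vanishes at the reference state, since $U^\ast=\underline{U}^+$ (which has $v=0$) when $U^+=\underline{U}^+$, while $u^\ast$ stays bounded away from zero on the small ball so the quotient is well defined. Lipschitz continuity of $k$ on $O_\epsilon(\underline{U}^+)$ then yields $|k|\le K_1'|U^+-\underline{U}^+|$. The only genuine obstacle is the implicit differentiation that isolates the pressure direction and thereby produces the clean leading-order dependence on $p^+-\underline p$; everything else is a direct application of the implicit function theorem and Taylor's theorem.
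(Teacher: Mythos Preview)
Your proof is correct and follows essentially the same route as the paper: define the pressure component of $\Psi_4(\alpha_4;U^+)$ minus $\underline{p}$, apply the implicit function theorem using $\partial_{\alpha_4}P(0;\underline{U}^+)=-(\bfr_4)_3(\underline{U}^+)<0$, differentiate implicitly to find $\nabla_{U^+}\alpha_4|_{\underline{U}^+}=(0,0,1,0)/(\bfr_4)_3(\underline{U}^+)$, and then Taylor-expand. Your slope estimate via Lipschitz continuity of $k(U^+)=v^\ast/u^\ast$ is a slightly more compressed version of the paper's argument (which first bounds $|U^\ast-\underline{U}^+|$ through $|\alpha_4|$), but the content is the same.
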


\begin{proof}
1. We write $U^{(k)}$ to denote the $k$-th argument of the vector
$U$, $k=1,\ldots,4$. Consider the function:
\begin{eqnarray*}
L(\alpha,U^+)=(\Psi_4(\alpha;U^+))^{(3)}-\underline{p}
=(\Psi_4(\alpha;U^+)-\Psi_4(0;\underline{U}^+))^{(3)},
\end{eqnarray*}
for which $L(0;\underline{U}^+)=0$. Then
\begin{eqnarray*}
\p_\alpha
L(0;\underline{U}^+)=-(\bfr_4(\underline{U}^+))^{(3)}=-(\kappa_4\rho
u\lambda_4)|_{\underline{U}^+}<0.
\end{eqnarray*}
From the implicit function theorem, we infer that $\alpha$ can be
viewed as a function of $U^+\in O_\epsilon(\underline{U}^+)$ for
suitably small $\epsilon>0$. In particular, $\alpha(\underline{U}^+)=0$.
This completes the existence proof.

\medskip
2. Since $\nabla_U\Psi_4(0;U)=I_4$,
$\p_U L(0;\underline{U}^+)=(0,0,1,0)$. Then
$$
\nabla_U\alpha(\underline{U}^+)=\frac{(0,0,1,0)}{(\kappa_4\rho
u\lambda_4)|_{\underline{U}^+}}.
$$
Thus, by the Taylor expansion, we conclude
$$
\alpha=K_1(p^+-\underline{p})+M_1|U^+-\underline{U}^+|^2,
$$
where $K_1=\frac{1}{(\kappa_4\rho u\lambda_4)|_{\underline{U}^+}}>0$,
and $M_1$ is a constant depending continuously and only on
$\underline{U}^+$ and $\epsilon$.

\medskip
3. From the above, we have
$$
|U^\ast-U^+|\le B|\alpha|\le
B'|U^+-\underline{U}^+|.
$$
Then we have
$$
|U^\ast-\underline{U}^+|\le
B''|U^+-\underline{U}^+|
$$
for some constant $B''>0$. Hence,
regarding $\frac{v}{u}$ as a function of $U$ and
by the mean value theorem,
we have
$$
|\frac{v^\ast}{u^\ast}|\le C|U^*-\underline{U}^+|\le
K_1'|U^+-\underline{U}^+|
$$
as desired.
\end{proof}

\subsection{Approximate Riemann Solver}\label{sec25}
The front tracking method involves approximating the rarefaction
waves appeared in the Riemann problems or (free) boundary Riemann
problems by several artificial discontinuities separating piecewise
constant states.

Suppose that $U^+=\Phi(\alpha_4,\alpha_3,\alpha_2,\alpha_1;U^-)$ gives
the solution to the standard Riemann problem \eqref{RP}, with middle
states $U^1=\Phi_1(\alpha_1;U^-)$ and $U^2=\Psi_4(\alpha_4; U^+)$.
For any $\delta>0$, we define a $\delta$-approximate solution
$U^\delta$ to the Riemann problem as follows:

\begin{itemize}
\item If $\alpha_1>0$, then the 1-wave is a rarefaction wave that requires
modification as follows. Set $\nu$ be the closest integer to
$\frac{\alpha_1}{\delta}$ (that is, $\nu\in\mathbb{Z}$ and
$\frac{\alpha_1}{\delta}-\frac{1}{2}\le\nu<\frac{\alpha_1}{\delta}+\frac{1}{2}$), as well as
$U_{1,0}=U^-,$ $U_{1,\nu}=U^1$, and
$U_{1,k}=\Phi_1(\frac{1}{\nu}\alpha_1;U_{1,k-1})$ for
$k\in\{1,\ldots,\nu-1\}$. Then,
in the wedge $\{(x,y): x>0,
y<\lambda_*x\}$, we define

\begin{eqnarray}\label{eqappr}
U^\delta=\begin{cases} U^-,& y<\lambda_1(U^-)x,\\
U_{1,k}, & \lambda_1(U_{1,k-1})x<y<\lambda_1(U_{1,k})x, \quad
k=1,\ldots,\nu-1,\\
U^1, &\lambda_1(U_{1,\nu-1})x<y<\lambda_*x.
\end{cases}
\end{eqnarray}
Here $\lambda_*$ is a constant chosen so that $\sup_{U\in
O_\epsilon(\underline{U}^+)}{\lambda_1}<\lambda_*<\inf_{U\in
O_\epsilon(\underline{U}^+)}{\lambda_2}$, which exists when $\epsilon$
is small.

Then the rarefaction wave is replaced by ``step" functions with width
(strength) $\frac{\alpha_1}{\nu}$, and the discontinuity between two steps
moves with the characteristic speed of the lower state.

\item If $\alpha_1<0$, then the 1-wave is a shock, and no change is necessary. In the wedge $\{(x,y): x>0,
y<\lambda_*x\}$, we define
$$U^\delta=\begin{cases}
U^-, &y<s_1x,\\
U^1, &s_1x<y<\lambda_*x,
\end{cases}$$
where $s_1$ is the speed of the shock front.

\item For $\alpha_2, \alpha_3$, there is always no change.

\item Similar to the case of the 1-wave, we can define $U^\delta$ in $\{x>0,y>-\lambda_*x\}$ by
considering whether the $4$-waves is a rarefaction wave (with
modification) or a shock (without modification).
\end{itemize}

\subsection{Interaction of Weak Waves}\label{sec26}
The following weak wave interaction estimate is classical; see Lemma
3.2 in \cite[p.1670]{Chen-Zhang-Zhu-2006}.

\begin{lemma}\label{lem23}
Suppose that $U^+,U^m$, and $U^-$ are three states in a small
neighborhood of $U_0$ with
$U^+=\Phi(\alpha_4,\alpha_3,\alpha_2,\alpha_1;U^m)$,
$U^m=\Phi(\beta_4,\beta_3,\beta_2,\beta_1;U^-)$, and
$U^+=\Phi(\gamma_4,\gamma_3,\gamma_2,\gamma_1;U^-)$. Then
\begin{eqnarray}
\gamma_j=\alpha_j+\beta_j+O(1)\triangle(\alpha,\beta),
\end{eqnarray}
where
$\triangle(\alpha,\beta)=|\alpha_4|(|\beta_1|+|\beta_2|+|\beta_3|)+(|\alpha_2|+|\alpha_3| )|\beta_1|
+\sum_{j=1,4}\triangle_j(\alpha,\beta)$, with
$$
\triangle_j(\alpha,\beta)=\begin{cases}
0, & \alpha_j\ge0,\ \beta_j\ge0,\\
|\alpha_j||\beta_j|, &\text{otherwise}.
\end{cases}$$
\end{lemma}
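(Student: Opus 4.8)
The plan is to realize the resolved strengths $\gamma=(\gamma_1,\gamma_2,\gamma_3,\gamma_4)$ as a smooth function of the incoming strengths $(\alpha,\beta)$ and then to Taylor-expand, keeping track of exactly which quadratic products survive. First I would use the composition identity
\[
\Phi(\gamma;U^-)=\Phi(\alpha;\Phi(\beta;U^-)),
\]
together with the fact from Subsection~\ref{sec22} that $\{\bfr_j(U_0)\}_{j=1}^4$ are linearly independent, so that $\alpha\mapsto\Phi(\alpha;U^-)$ is a $C^2$ diffeomorphism near the origin. The implicit function theorem then produces a $C^2$ map $(\alpha,\beta)\mapsto\gamma(\alpha,\beta)$ with $\gamma(0,0)=0$ and $\p_{\alpha_j}\gamma|_0=\p_{\beta_j}\gamma|_0=e_j$, the $j$-th standard basis vector.

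Next I would record the two boundary identities. When $\beta=0$ the composition reduces to $\Phi(\gamma;U^-)=\Phi(\alpha;U^-)$, whence $\gamma(\alpha,0)=\alpha$; symmetrically $\gamma(0,\beta)=\beta$. Thus the error $E(\alpha,\beta):=\gamma(\alpha,\beta)-\alpha-\beta$ vanishes identically on both coordinate subspaces $\{\alpha=0\}$ and $\{\beta=0\}$, which already yields the crude bilinear bound $E=O(1)\sum_{i,j}|\alpha_i||\beta_j|$ with $O(1)$ uniform over $O_\epsilon(U_0)$. The entire content of the lemma is then the \emph{refinement}: to show that the coefficient of every product $|\alpha_i||\beta_j|$ not collected in $\triangle(\alpha,\beta)$ actually vanishes.

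For this I would argue family by family, the organizing principle being that the concatenated wave sequence must be sorted back into canonical family order and that each required transposition of two non-commuting families is what generates an error term. The linearly degenerate fields furnish the cleanest inputs: along $C_2$ and $C_3$ the parametrization is exponential, so $\Phi_2(\alpha_2;\Phi_2(\beta_2;U))=\Phi_2(\alpha_2+\beta_2;U)$ and likewise for $C_3$; hence same-family contact interactions compose exactly additively and, since $\lambda_2=\lambda_3$, a $2$-wave and a $3$-wave are parallel and never cross. This explains why no product among the indices $\{2,3\}$ appears in $\triangle$. For the genuinely nonlinear families $j\in\{1,4\}$ I would use that the rarefaction branch of $\Phi_j$ is the flow of $\bfr_j$, so two same-family rarefactions again satisfy the semigroup identity and contribute nothing, giving $\triangle_j=0$ when $\alpha_j,\beta_j\ge0$; as soon as a shock is present the Hugoniot and rarefaction curves agree only to second order, producing the $O(1)|\alpha_j||\beta_j|$ discrepancy recorded in $\triangle_j$. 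The remaining cross-family coefficients $\p_{\alpha_i}\p_{\beta_j}\gamma|_0$ with $i\ne j$ are obtained by differentiating the composition identity twice; they vanish precisely for the non-approaching pairs (those two families already in sorted order, for which the nested composition is exactly additive, e.g.\ $\Phi_4(\alpha_4;\Phi_1(\beta_1;U^-))=\Phi(\alpha_4,0,0,\beta_1;U^-)$), leaving exactly the approaching products assembled in $\triangle(\alpha,\beta)$.

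The main obstacle I anticipate is this last bookkeeping of the mixed second derivatives, compounded by the fact that $\gamma$ is only piecewise $C^2$: because $\Phi_j$ switches between the rarefaction branch ($\alpha_j\ge0$) and the shock branch ($\alpha_j<0$), the relevant second derivatives jump across the hyperplanes $\{\alpha_j=0\}$ and $\{\beta_j=0\}$, which is precisely why $\triangle_j$ carries its sign-dependent case distinction. I would therefore perform the Taylor-with-remainder estimate separately in each sign quadrant of $(\alpha_j,\beta_j)$ and patch the resulting bounds, rather than invoke a single global second-order expansion. Since this is the classical Glimm interaction estimate specialized to the steady Euler wave curves, I would follow the scheme of Chen-Zhang-Zhu~\cite{Chen-Zhang-Zhu-2006} and the standard front-tracking references~\cite{Holden-Risebro2002}, the only genuinely system-specific ingredients being the exponential contact curves $C_2,C_3$ and the coincidence $\lambda_2=\lambda_3$.
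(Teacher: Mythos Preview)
The paper offers no proof of this lemma; it simply records it as ``classical'' and cites Lemma~3.2 of Chen--Zhang--Zhu \cite{Chen-Zhang-Zhu-2006}. Your outline is precisely the standard derivation of the Glimm interaction estimate---implicit function theorem to obtain a $C^2$ map $\gamma(\alpha,\beta)$, boundary identities $\gamma(\alpha,0)=\alpha$ and $\gamma(0,\beta)=\beta$ to reduce the error to a bilinear remainder, and then case-by-case elimination of the coefficients corresponding to pairs that are already in canonical order or compose as a one-parameter group---and it is correct in substance. The system-specific observations you make (the exponential parametrization of $C_2,C_3$, the commutation of $\Phi_2$ with $\Phi_3$, and the rarefaction semigroup property for $j=1,4$) are exactly the ingredients that pin down which quadratic products survive.

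One point of caution. Your worked example asserts that the pair $(\alpha_4,\beta_1)$ is ``non-approaching'' and contributes no error because $\Phi_4(\alpha_4;\Phi_1(\beta_1;U^-))=\Phi(\alpha_4,0,0,\beta_1;U^-)$; that algebraic identity is correct, but the product $|\alpha_4||\beta_1|$ is nevertheless \emph{present} in the displayed $\triangle(\alpha,\beta)$. In fact the cross-family terms listed in $\triangle$ are the products $|\alpha_i||\beta_j|$ with $i>j$, whereas the out-of-order pairs in the composition $\Phi(\alpha;\Phi(\beta;\cdot))$ that your argument isolates are the opposite ones, $|\alpha_j||\beta_i|$ with $i>j$. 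This appears to be a labeling convention carried over from \cite{Chen-Zhang-Zhu-2006}; your method would produce the sharp estimate with the roles of $\alpha$ and $\beta$ exchanged in $\triangle$. Since in the front-tracking scheme of \S3 only a single pair of approaching fronts collides at a time (and one is free to name the lower front $\alpha$ or $\beta$), this has no effect on the Glimm-functional argument, but you should not expect your derivation to reproduce the displayed $\triangle$ verbatim.
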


\subsection{Interaction of Weak Wave and Free Boundary}\label{sec27}
We now consider the change of strength when a weak
wave interacts with the free boundary (see Figure 3).
It
is only possible that a weak 1-wave $\alpha_1$ impinges on the
characteristic discontinuity $S^l$, and resulting a reflected 4-wave with
parameter $\alpha_4$, and the characteristic discontinuity itself is also
deflected to a new direction, denoted to be $S^r$. We note that both
$U^r$ and $S^r$ can be solved by the free boundary Riemann problem with
initial data $U^m$.

\begin{figure}[h]
\centering
  \setlength{\unitlength}{1bp}%
  \begin{picture}(300, 200)(0,-20)
  \put(0,0){\includegraphics[scale=0.70]{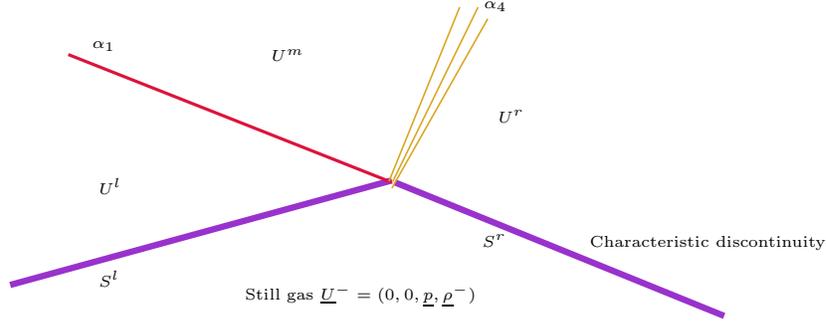}}

   \put(188,77){\fontsize{6}{5}\selectfont ${\tiny U^r}$}
    \put(182,30){\fontsize{6}{5}\selectfont ${\tiny S^r}$}
 \put(35,105){\fontsize{6}{5}\selectfont ${\tiny \alpha_1}$}
\put(180,120){\fontsize{6}{5}\selectfont { $\alpha_4$}}
\put(100,100){\fontsize{6}{5}\selectfont { $U^m$}}
\put(35,50){\fontsize{6}{5}\selectfont { $U^l$}}
\put(35,15){\fontsize{6}{5}\selectfont { $S^l$}}
 \put(90,10){\fontsize{6}{5}\selectfont { Still
gas $\underline{U}^-=(0,0,\underline{p},\underline{\rho}^-)$}}
\put(220,30){\fontsize{6}{5}\selectfont { Characteristic discontinuity}}
  \end{picture}\label{fig3a}
\caption{\small   A 1-wave $\alpha_1$ is reflected by the characteristic discontinuity
$S^l$, resulting in a reflected 4-wave $\alpha_4$ and deflected characteristic
discontinuity $S^r$. }
\end{figure}

\begin{lemma}\label{lem:reflection}
Suppose that $U^l,U^m$, and $U^r$ are three states in
$O_\epsilon(\underline{U}^+)$ for sufficiently small $\epsilon$,
with $U^m=\Phi_1(\alpha_1;U^l)=\Phi_4(\alpha_4;U^r)$. Then
\begin{eqnarray}\label{reflection}
\alpha_4=-K_2\alpha_1+M_2|\alpha_1|^2,
\end{eqnarray}
with the constant $K_2>0$ and the quantity $M_2$ bounded in
$O_\epsilon(\underline{U}^+)$.
Furthermore, for $U^l = (u_l, v_l, p_l, \rho_l)$,
$|K_2| > 1, |K_2| < 1, \text{ and } |K_2| =1 $ when $v_l<0$, $v_l>0$, \text{ and } $v_l=0$,
respectively.
\end{lemma}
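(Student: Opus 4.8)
The plan is to determine $\alpha_4$ implicitly from $\alpha_1$ through the constant-pressure condition on the free boundary, and then to read off $K_2$ as the leading Taylor coefficient. Since $U^l$ sits adjacent to the incoming characteristic discontinuity $S^l$ and $U^r$ to the deflected one $S^r$, both satisfy $p=\underline p$; that is, $p_l=p_r=\underline p$. Writing $U^r=\Psi_4(\alpha_4;\Phi_1(\alpha_1;U^l))$ with the backward $4$-map $\Psi_4$ of the standard Riemann problem, I would set
\[
F(\alpha_1,\alpha_4):=\big(\Psi_4(\alpha_4;\Phi_1(\alpha_1;U^l))\big)^{(3)}-\underline p,
\]
so that $F(0,0)=0$ and $\partial_{\alpha_4}F(0,0)=-(\bfr_4(U^l))^{(3)}=-\kappa_4\rho(\lambda_4u-v)\big|_{U^l}\neq0$. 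The implicit function theorem then produces a $C^2$ map $\alpha_4=\alpha_4(\alpha_1)$ with $\alpha_4(0)=0$, whose Taylor expansion is exactly $\alpha_4=-K_2\alpha_1+M_2|\alpha_1|^2$, with $M_2$ bounded on $O_\epsilon(\underline U^+)$ by the $C^2$-bounds on the wave maps and
\[
-K_2=\frac{d\alpha_4}{d\alpha_1}\Big|_0=-\frac{\partial_{\alpha_1}F}{\partial_{\alpha_4}F}\Big|_0=\frac{(\bfr_1(U^l))^{(3)}}{(\bfr_4(U^l))^{(3)}}.
\]
Substituting $(\bfr_j)^{(3)}=\kappa_j\rho(\lambda_ju-v)$ and the eigenvalues in \eqref{2.2}, the density cancels and, with $q=\sqrt{u^2+v^2-c^2}$, I obtain
\[
K_2=\frac{\kappa_1}{\kappa_4}\cdot\frac{uq-c^2v}{uq+c^2v}\bigg|_{U^l}.
\]
Because $\kappa_1,\kappa_4>0$ near $\underline U^+$ and $uq>0$ dominates $c^2v$ on $O_\epsilon(\underline U^+)$, both factors are positive, so $K_2>0$.

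For the trichotomy I would exploit the reflection symmetry $(y,v)\mapsto(-y,-v)$ of \eqref{euler}, under which $\lambda_1\leftrightarrow-\lambda_4$ while the renormalization $\bfr_j\cdot\nabla_U\lambda_j\equiv1$ is preserved; hence $\kappa_1(u,v,p,\rho)=\kappa_4(u,-v,p,\rho)$. Two facts follow at once: at $v_l=0$ one has $\kappa_1=\kappa_4$ and the eigenvalue factor equals $1$, so $|K_2|=1$; and for general states the symmetry gives the reciprocal identity $K_2(u,v,p,\rho)\,K_2(u,-v,p,\rho)=1$, so that $\ln K_2$ is odd in $v$ at fixed $(u,p,\rho)$. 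Therefore $\ln K_2=a_1v+O(v^3)$, and the sign of $K_2-1$ for small $v_l\neq0$ is dictated by the sign of $a_1v_l$. Differentiating at $v=0$, the eigenvalue factor contributes $-2c^2/(u\sqrt{u^2-c^2})<0$, while the normalization ratio contributes $-2\,\partial_v\ln\kappa_4|_{v=0}$ (using $\partial_v\ln\kappa_1|_0=-\partial_v\ln\kappa_4|_0$ from the symmetry), so that
\[
a_1=-\frac{2c^2}{u\sqrt{u^2-c^2}}-2\,\partial_v\ln\kappa_4\big|_{v=0}.
\]

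The main obstacle is pinning down the sign of $a_1$: I must verify that the bounded normalization term cannot overturn the manifestly negative eigenvalue term, i.e. that $\partial_v\ln\kappa_4|_{v=0}>-c^2/(u\sqrt{u^2-c^2})$. This is the one genuinely computational point, requiring the explicit evaluation of $\partial_v\kappa_4$ at the reference state from $\bfr_4\cdot\nabla_U\lambda_4\equiv1$ with $c^2=\gamma p/\rho$; carrying it out confirms $a_1<0$. Granting this and shrinking $\epsilon$ so the uniform cubic remainder cannot change the sign of $\ln K_2$, oddness yields $\ln K_2>0$ for $v_l<0$ and $\ln K_2<0$ for $v_l>0$; together with $|K_2|=1$ at $v_l=0$ and $K_2>0$ throughout, this is exactly the asserted trichotomy $|K_2|>1,\,<1,\,=1$ according as $v_l<0,\,>0,\,=0$.
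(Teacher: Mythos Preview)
Your implicit-function-theorem setup and the identification $-K_2=(\bfr_1(U^l))^{(3)}/(\bfr_4(U^l))^{(3)}$ coincide with the paper's proof, including the conclusion $K_2>0$.

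The approaches diverge on the trichotomy. The paper argues directly: it expresses $K_2$ as a ratio built from $v_l/u_l$, $\lambda_1(U^l)$, and $\lambda_4(U^l)$, then reads off the three cases from the ordering $\lambda_1<v/u<\lambda_4$ together with the explicit formulas \eqref{2.2}; no symmetry or perturbative expansion enters. Your route via the reflection $(y,v)\mapsto(-y,-v)$ is correct as far as it goes --- the identity $\kappa_1(u,-v,p,\rho)=\kappa_4(u,v,p,\rho)$ does hold, and the reciprocal relation it yields gives $|K_2|=1$ at $v_l=0$ for free --- but for $v_l\neq0$ you reduce everything to the sign of the first Taylor coefficient $a_1$, and here there is a genuine gap. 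Determining that sign requires the explicit value of $\partial_v\ln\kappa_4\big|_{v=0}$, which in turn involves second derivatives of $\lambda_4$ with respect to $U$, and you do not carry that computation out; ``carrying it out confirms $a_1<0$'' is an assertion, not a verification. Until that derivative is actually computed or bounded, your argument does not establish the cases $v_l\gtrless0$. You are right to retain the factor $\kappa_1/\kappa_4$ in your formula for $K_2$ (the paper's displayed expression in its third step suppresses it), but that very factor is what makes your perturbative route harder than a direct computation from the eigenvalue formulas.
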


\begin{proof}
1. We have $U^m=\Phi_1(\alpha;U^l)$ and $U^r=\Psi_4(\beta;U^m)$.
Consider the following  function:
$$
L(\beta,\alpha):=(\Psi_4(\beta;\Phi_1(\alpha;U^l))-U^l)^{(3)}.
$$
Then $L(0,0)=0,$ and $\p_\beta L(0,0)=-(\bfr_4(U^l))^{(3)}<0$.
By the implicit function theorem, there exists a function
$\beta=\beta(\alpha)$ so that $L(\beta(\alpha),\alpha)=0$ for
small $\alpha$. We see $\beta(0)=0.$

\medskip
2. We calculate $\p_\alpha L(0,0)=(\bfr_1(U^l))^{(3)}<0$. Thus, $\frac{\dd\beta(0)}{\dd\alpha}=-K_2:=\frac{(\bfr_1(U^l))^{(3)}}{(\bfr_4(U^l))^{(3)}}<0$.
Therefore the equality in \eqref{reflection} follows from Taylor
expansion.

3. The coefficient
$$
K_2:=-\frac{(\bfr_1(U^l))^{(3)}}{(\bfr_4(U^l))^{(3)}}
 = \frac{\frac{v_l}{u_l}-\lambda_{1}(U^l) }{\frac{v_l}{u_l}+\lambda_{4}(U^l)}>0
$$
and, for any state $U = (u,v,p,\rho) \in O_{\epsilon}(\underline{U}^+)$,
there holds $\lambda_{1}(U) < \lambda_{2,3}(U)= \frac{v}{u} < \lambda_{4}(U)$.
Using these two facts with the expressions for $\lambda_1(U)$ and $\lambda_4(U)$
given in \eqref{2.2}, it follows that $|K_2| < 1, |K_2| > 1,
\text{ and } |K_2| = 1$ when $v_l>0, v_l<0, \text{ and } v_l=0$, respectively.
\end{proof}

\section{Construction of Approximate Solutions and Uniform Estimates}
In this section we adopt the front tracking method in Holden-Risebro
\cite{Holden-Risebro2002} to construct a family of approximate
solutions $\{(g^\delta, U^\delta)\}_{\delta>0}$ of the problem
\eqref{prob} and present some uniform estimates independent of
$\delta$, which is necessary for a compactness argument in \S 4
to show the existence of a weak entropy solution to
\eqref{prob}.

\subsection{Construction of Approximate Solutions}

For any given $\delta>0$, we now describe the construction of an
approximate solution $(g^\delta, U^\delta)$ to the free boundary
problem \eqref{prob}.

We first approximate the initial data $U_0(y)$ by
a piecewise constant function $U_0^\delta(y)$ as done in the study
of the Cauchy problem. We require that
\begin{eqnarray}
\lim_{\delta\to 0}\big\|U_0-U_0^\delta\big\|_{L^1([0,\infty))}=0.
\end{eqnarray}
By Remark \ref{remark1}, we may also assume that, for each
$\delta>0$, there holds
$U_0^\delta(y)=\underline{U}_0$ for large $y$.

We solve the Riemann problems with initial data on $\{x=0, y>0\}$
and a free boundary Riemann problem at the corner $(0,0)$, and then
approximate rarefaction waves as carried out in \S 2.5 with
parameter $\delta$ to obtain new discontinuities. Note the
resulting (approximate) solution is piecewise constant.

Then we need do nothing until as $x$ increases to some value
$x=\tau$, where
\begin{itemize}
\item[(\rmnum{1})] either two fronts interact;

\item[(\rmnum{2})] or there is a weak 1-wave that interacts the free boundary
(it is obtained by solving the free boundary Riemann problem before)
from above.
\end{itemize}

As noted in \cite{Bressan2000}, by adjusting the slopes of the
discontinuities, we can assume that, at each $\{x=\tau\}$, only one of the
two cases above happens. This is harmless since the error can be
made to be arbitrarily small.

For case (\rmnum{1}), as mentioned above, by adjusting the slopes of
these discontinuities (with arbitrarily small error),
we may assume that only two discontinuities collide.
Suppose that the lower discontinuity is of $r$-family and has a
parameter $\alpha$ with the lower (constant) state $U^l$ and
upper (constant) state $U^m$,
the upper discontinuity is of $s$-family and has a
parameter $\beta$ with the lower (constant) state $U^m$ and
upper (constant) state $U^r$, and they collide at the point
$(\tau,\eta)$.
Then, as before, we solve a Riemann problem at $(\tau,\eta)$ with
the lower state $U^l$ and upper state $U^r$, by applying the
approximate Riemann solver to obtain new discontinuities.

For case (\rmnum{2}), we may still assume only one discontinuity
collides with the free boundary (transonic characteristic discontinuity).
Then we solve a wave reflection-deflection problem with a 1-wave
reflected by the free boundary, obtaining a reflected 4-wave and a deflected
characteristic discontinuity (see Figure 2).
If the reflected 4-wave is a rarefaction wave, by approximating the
rarefaction wave, we obtain again
the approximate solver containing new discontinuities.

Continuing this procedure and, in some cases, removing certain quite
weak fronts (cf. \S \ref{sec3.4.2} below for details), we
obtain an approximate solution $(g^\delta,U^\delta)$.

\begin{remark}
To ensure that the above procedure works to construct an approximate
solution for all $x\in[0,\infty)$, we need to show that, for any
$0<x<\infty$,
\begin{itemize}  \item The total variation is small:
$\mathrm{T.V.}(U^\delta(x,\cdot))\le C\varepsilon$;

\item An $L^\infty$--bound: The solution still lies in a small
neighborhood of $\underline{U}^+$;

\item Given any finite $T>0$,
there happens only a finite number of collisions/reflections
for $\{0<x<T\}$.
\end{itemize}
The first two are necessary so that we can actually solve the
standard or free boundary Riemann problem. Here $C$ is a universal
constant independent of $\varepsilon$ and $x>0$. The third one
guarantees that the global approximate solutions defined
up to any $x>0$ can be actually obtained.

In the following three subsections, we deal with these three issues.
\end{remark}

\subsection{Bounds of Total Variation}

We now establish the bounds of total variation of the approximate solutions $U^\delta(x,y)$.

\subsubsection{Glimm Functional}

We introduce the following version of Glimm functional
\begin{eqnarray}
G(x)=V(x)+\kappa Q(x),
\end{eqnarray}
where $\kappa > 0$ is a large constant to be chosen. The terms $V$
and $Q$ are explained below. By the properties of the approximate Riemann
solver, $\mathrm{T.V.}(U^\delta(x,\cdot))$ is equivalent to $V(x)$.
Then it suffices to prove
\begin{eqnarray}
V(x)\le C_0\varepsilon
\end{eqnarray}
for a constant $C_0$ depending only on $\underline{U}^+$. Recall
here $\varepsilon=\norm{U_0-\underline{U}^+}_{\rm{BV}([0,\infty))}$
measures the strength of the perturbation of initial data.

For a weak wave/discontinuity $\alpha$ of $i_\alpha$-family, we
define its weighted strength as
\begin{equation}
b_{\alpha} =
\begin{cases} k_{+}\alpha & \text{if $\alpha \in \Upsilon_t$ and $i_{\alpha} =1$},
\\
\alpha &\text{if $\alpha \in \Upsilon_t$ and $i_{\alpha} =2,3,4$},
\end{cases}
\end{equation}
where $k_{+} > |K_2|$ for the coefficient $K_{2}$ appeared in Lemma
\ref{lem:reflection}, and we use $\Upsilon_t$ to denote the set of
weak waves/discontinuities (not including the free boundary) that
cross the line $\{x=t\}$.

{$\bullet$\it The weighted strength term $V(t)$.} We define the
total (weighted) strengths of weak waves/discontinuities at $x=t$ as
\begin{eqnarray}
V(t) = \sum_{\alpha\in\Upsilon_t} |b_{\alpha}|.
\end{eqnarray}

{$\bullet$ \it The interaction potential term $Q(t)$.} The
interaction potential term we use here is the same one as
introduced by Glimm \cite{Glimm}, that is:
\begin{eqnarray}
Q(t) &=& \sum_{(b_\alpha, b_\beta) \in \mathcal{A}(t)} |b_{\alpha}
b_{\beta}|,
\end{eqnarray}
where $\mathcal{A}(t)$ is the {\it approaching set} defined by pairs
$(b_{\alpha},b_{\beta})$ so that, for $x=t$, the waves/discontinuity
with strength $b_{\alpha}$ lies in the lower side of the
waves/discontinuity with strength $b_{\beta}$, and $b_{\alpha}$ is
of family $i_{\alpha}$ and $b_{\beta}$ is of family
$i_{\beta}$, where $i_{\alpha} > i _{\beta}$, or both are of the same
family but at least one of them is a shock. Note we do not consider
the free boundary as a wave/discontinuity in this paper.

As shown by Lemma 6.2 in \cite{Holden-Risebro2002},
at $x=\tau$, if two discontinuities of strengths $b_{\alpha}$ and
$b_{\beta}$ collide, then we have
\begin{eqnarray}
Q(\tau+)-Q(\tau-)=-\frac{1}{2}|b_{\alpha}b_{\beta}|,
\end{eqnarray}
provided that
\begin{eqnarray}\label{TM1}
V(\tau-)\le \mu:=\frac{1}{2} O(1).
\end{eqnarray}
It is here one needs Lemma \ref{lem23}. If no discontinuities
collide at $x=\tau$, then $Q(\tau+)=Q(\tau-)$.

\subsubsection{Non-increasing of the Glimm Functional}
We now show the bounds of total variation by proving that the Glimm
functional $G(x)$ is non-increasing for $x$. There are the following
three cases.
\begin{itemize}
\item[(\rmnum{1})] {\it  Collision of discontinuities.} For $x=\tau$
where two discontinuities $b_{\alpha}$ and $b_{\beta}$ collide,
there is no other wave interaction and reflection upon the free
boundary as we assumed. Therefore, the decreasing of $G(\tau)$ is
classical. By Lemma \ref{lem23}, we have
\begin{eqnarray*}
G(\tau+)-G(\tau-)&=&(V(\tau+)-V(\tau-))
+\kappa(Q(\tau+)-Q(\tau-))\\
&\le& M |b_{\alpha}b_{\beta}|+\kappa(-\frac{1}{2}|b_{\alpha}b_{\beta}|
)\le 0,
\end{eqnarray*}
if we choose $\kappa\ge 2M$ sufficiently large. Note that $O(1)$ does
not depend on the approximation parameter $\delta$.

\item[(\rmnum{2})] {\it  Weak 1-wave interacts with the free boundary.} For $x=\tau$, a weak wave
$\alpha_1$ of 1-family interacts with the free boundary from above,
resulting in a reflected 4-wave $\alpha_4$. By Lemma
\ref{lem:reflection}, we have
\begin{eqnarray*}
G(\tau+)-G(\tau-)&=&(V(\tau+)-V(\tau-))
+\kappa(Q(\tau+)-Q(\tau-))\\
&\le&|b_{\alpha_4}|-|b_{\alpha_1}|+\kappa \mu|b_{\alpha_4}|\\
&\le&((\kappa \mu+1)(-K_2+M_2\mu)-k_+)|\alpha_1|\le 0
\end{eqnarray*}
if we choose $k_+$ sufficiently large (independent of $\delta$).

\item[(\rmnum{3})] {\it  Other situation.}  If, for $x=\tau$, no collision
or reflection upon the free boundary happens, then we still have
$G(\tau+)=G(\tau-)$.
\end{itemize}

In the above, we have determined $\kappa$ and $k_+$ independent of
$\delta$, and proved that, for any $x=\tau>0$, there holds $G(\tau+)\le
G(\tau-)$, provided \eqref{TM1} holds.

\subsubsection{Boundedness of Total Variation}
The bound $V(\tau)\le C_0\varepsilon$ then follows from an
induction argument as shown in \cite[p.217]{Holden-Risebro2002} for
the proof of Lemma 6.3 there, provided that $\varepsilon$ is small.

We first set $0<\tau_1<\ldots<\tau_k<\ldots$ as the sequence so
that, for $x=\tau_k$, either collision or reflection upon the free
boundary occurs, and set $V_k,\ G_k$ the value of $V(\tau_k-),\
G(\tau_k-)$ respectively.

We know that there exists a constant $C_1$ independent of $\delta>0$ so that
$V(\tau)\le C_1\rm{T.V.}(U^\delta(\tau,\cdot))$ for all $x\ge0$.
Note here the choice of weight $k_+$ is in essence only determined
by $\underline{U}^+$. Define
$$
C_0=C_1+\kappa C_1^2.
$$
We choose positive $\varepsilon<1$ small so that
$$
C_1\varepsilon+\kappa (C_1\varepsilon)^2\le\mu, \qquad C_2C_0\varepsilon
\le\epsilon.
$$
Here $\epsilon$ is the value so that the Riemann
problems or the free boundary Riemann problems can be solved when
the Riemann data are in $O_\epsilon(\underline{U}^+)$, and $C_2$ is the
constant depending only on $\underline{U}^+$ so that
${\rm{T.V.}}(U^\delta(x,\cdot))\le C_2 V(x)$ for any $x>0$.

By assumption on the initial data, we have $\TV(U_0^\delta)\le
\varepsilon$. Thus, by a property of the Riemann problem, we may have
$$
V_1\le C_1\varepsilon\le \min\{C_0\varepsilon, \mu\},
$$
and
furthermore,
$$
G_1\le V_1+\kappa V_1^2\le C_1\varepsilon+\kappa
(C_1\varepsilon)^2\le\min\{C_0\varepsilon,\mu\}.
$$

Suppose that, for $n\le k$, we have proved
$$
V_n\le
\min\{C_0\varepsilon,\mu\}.
$$
Then, by decreasing of the Glimm functional,
we have proved that there holds
$$
V_{k+1}\le G_{k+1}\le G_k\le\ldots \le G_1.
$$
This shows
$$
V_n\le\min\{C_0\varepsilon,\mu\}\qquad \mbox{for all $n$}.
$$
If we
further choose $\varepsilon$ small so that $C_0\varepsilon\le\mu$,
we obtain the bound $V(\tau)\le C_0\varepsilon$ as desired. This again
implies the uniform estimate:
\begin{eqnarray}\label{tvestimate}
{\rm T.V.}(U^\delta(x,\cdot))\le C_2C_0\varepsilon.
\end{eqnarray}

\subsection{$L^\infty$--Estimate of $\{U^\delta\}$ and
Lipschitz Estimate of $\{g^\delta\}$} The fact that
$\{U^{\delta}\}_{\delta>0}$ is uniformly bounded follows directly.
For each $x$, the solution $U^{\delta}(x,y)$ is just
the constant state $\underline{U}_0$ for sufficiently large $y$, by
the finiteness of propagation speed and the fact that
the initial data $U^\delta_0(y)\to \underline{U}_0$ for $y\to\infty$.
Since we have proved ${\rm T.V.}(U^{\delta}(t,\cdot)) \le
C_2C_0\varepsilon$ for any $t>0$, then, by definition of the total
variation, we conclude
\begin{eqnarray}\label{linfty}
\big\|U^{\delta}(x,\cdot)-\underline{U}^+\big\|_{L^{\infty}} \le
C_2C_0\varepsilon
\end{eqnarray}
for some new constant $C_2$.

Estimate \eqref{linfty} implies the following uniform
estimate on the free boundary that is given by the equation
$y=g^\delta(x)$:
\begin{eqnarray}\label{gbound}
\big\|(g^\delta)'\big\|_{L^\infty}\le C_3\varepsilon.
\end{eqnarray}
with a constant $C_3$ depending only on $\underline{U}^+$.
In particular, by construction, for fixed $\delta>0$, $g^\delta$ is a
piecewise linear (affine) function, and except for countable points
$\{\tau_k\}$, it is differentiable, with
$(g^\delta)'(x)=\frac{v^\delta(x,g^\delta(x))}{u^\delta(x,g^\delta(x))}$.
Thus, by the mean
value theorem,
$$
|(g^\delta)'(x)|\le C'|U^\delta(x,g^\delta(x))-\underline{U}^+|\le
C'\big\|U^\delta-\underline{U}^+\big\|_{L^\infty}\le
C'C_2C_0\varepsilon,
$$ where the constant $C'$ depends only on
$\underline{U}^+$.

\subsection {Finiteness of Collisions and Reflections}
To show that the numbers of fronts/discontinuities and
collisions/reflections do not approach infinity in $\{0<x<\tau\}$
for any finite $\tau>0$, the basic idea presented in
\cite{Holden-Risebro2002}
for the Cauchy problem works well,
but we have to consider additional issues such as the reflections
off the free boundary and the fact that the Euler system is not
strictly hyperbolic in the argument. For completeness, we give the
proof below, which closely follows that in
\cite{Holden-Risebro2002}.

\subsubsection{Generation of Fronts and Modified Construction of
Approximate Solutions}

Firstly, we define the notion of \textit{generation} of a front. We
set that each initial front starting at $x=0$ belongs to the first
generation. Take two first-generation fronts of families $d$ and
$h$, respectively, that collide. The resulting fronts of families
$d$ and $h$ belong to the first generation, while all the
remaining fronts resulting from the collision are called
second-generation fronts. Generally, if a front of family $d$ and
generation $m$ interacts with a front of family $h$ and generation
$n$, the resulting front of families $d$ and $h$ are still of
generation $m$ and $n$, respectively, while the remaining fronts
resulting from this collision are given generation $n+m$. The fronts
of 4-family resulting from reflection of a front $\alpha$ of
1-family off the free boundary  has the same generation of the front
$\alpha$. The point of this notion  is that the fronts of high
generation are quite weak.

Given the approximation parameter $\delta>0$, we remove all fronts
with generation higher than $N$, with
\begin{eqnarray}\label{cutoff}
N=\big[\ln_{4KT}(\delta)\big]
\end{eqnarray}
in our construction of approximate solution $(g^\delta, U^\delta)$.
Here $[z]$ denotes the integer larger than but closest to $z$
and, following the notations in \cite[p.218]{Holden-Risebro2002}, we
set
$$
T=T(x)=\sum_{\alpha\in\Upsilon_x}|\alpha|\le V(x), \qquad
K=\frac{1}{4C_0\varepsilon_0},
$$
with $\varepsilon_0$ sufficiently small
and fixed, and taking later $\varepsilon<\varepsilon_0$,
so that $T< \frac{1}{4K}$.

More precisely, if two fronts of generation $n$ and $m$ collide, at
most two waves will retain their generation. If $n + m > N$, then
the remaining waves will be removed; however, if $n+m \le N$, we use the
original (approximate) solution. When we remove the fronts, we let the
function $U^{\delta}$ be equal to the value that has to be the lower
of the removed fronts, provided that the removed fronts are not the
upmost fronts in the solution of the Riemann problem. If
the upmost are removed, then $U^{\delta}$ is set equal to
the value immediately to the upper of the removed fronts.

We remark that this process of removing (very) weak waves in
approximate Riemann solver in our construction of approximate
solutions will not influence the uniform estimates we obtained in \S 3.2--3.3.
In particular, we still have $T<\frac{1}{4K}$.

\subsubsection{Finiteness of Fronts and Collisions}\label{sec3.4.2}
We will show that there exists only a finite number of fronts of
generation less than or equal to $N$ and that, for a fixed $\delta$,
there is only a finite number of collisions/reflections.

For this, as we know that $T<\frac{1}{4K}$, then the strength of each
individual front is bounded by $\frac{1}{4K}$.
For later reference, we also
note that, by \eqref{cutoff},
\begin{eqnarray}\label{eqN}
(4KT)^{N+1}\le \delta.
\end{eqnarray}

First we consider the number of fronts of first generation. This
number can increase when the first-generation rarefaction fronts
split into several rarefaction fronts. By the term
$\textit{rarefaction front}$ we mean a front approximating a
rarefaction wave. Note that, by the construction of the approximate
Riemann problem, the strength of each split rarefaction front is at
least $\frac{3}{4}\delta$. Given that $T$ is uniformly bounded, we find
\begin{eqnarray}\label{firstgen}
{\text(\rm Number\ of\ first-generation\ fronts)} \le {\text(\rm
Number\ of\ initial\ fronts)} + \frac{4T}{3\delta}.
\end{eqnarray}

Thus, the number of first-generation fronts is finite. This also
means that there will be only a finite number of
collisions/reflections between first-generation fronts and free
boundary. To see this, note first having the assumption of strict
hyperbolicity would have implied that each wave family will have
speeds that are distinct. However, we see that, although the Euler
system is not strictly hyperbolic, the multiplicity of the
eigenvalues is constant for the states $U$ near the background state
$\underline{U}^+$. That is, $\lambda_1(U) < \lambda_2(U) =
\lambda_3(U) < \lambda_4(U)$ for any state $U \in
O_{\epsilon}(\underline{U}^+)$, and hence the eigenvalues are separable
in the same way for any state $U$.

Hence, we can still conclude that
each first-generation front will remain in a wedge in the $(x,y)$--plane
determined by the slowest and fastest speeds of that family.
Eventually, all first-generation fronts will have interacted at most
finite times, and we can also conclude that there can be only a
finite number of collisions between first-generation fronts and free
boundary globally, since once a front is reflected, it will never
meet the free boundary again.

Assuming now that, for some $m\ge1$, there will be only a finite
number of fronts of generation $i$, for all $i < m$, and that there
will only be a finite number of interactions between the fronts and
fronts reflection off free boundary of generation less than $m$.
Then, in analogy to \eqref{firstgen}, we find
\begin{eqnarray}
&&{\text{\rm Number of $m$-th generation fronts}}\nonumber\\
&&\le 2\times({\text{\rm Number of ${j}$-th and ${i}$-th-generation
fronts;}}\  i+j=m) + \frac{4T}{3\delta}<\infty.
\end{eqnarray}
Consequently, the number of fronts of generation less than or equal
to $m$ is finite. We can now repeat the arguments above showing
that there is only a finite number of collisions between
the first-generation fronts (and reflections off free boundary), just
replacing ``first generation'' by ``of generation less than or equal
to $m$'' and show that there is only a finite number of collisions
producing the fronts of generation of $m+1$. Thus, we can conclude that
there is only a finite number of fronts of generation less than
$N+1$, and that these interact (reflect off free boundary) only a
finite number of times.

\section{Convergence and Existence of Weak Entropy Solutions}

In this section we show the strong convergence of a subsequence
of the approximate solutions
to a weak entropy solution of problem \eqref{prob}.

\subsection{Compactness}
We first show there exists a subsequence of approximate solutions
$\{(g^\delta, U^\delta)\}_{\delta>0}$ that
converges to some $(g,U)$ almost everywhere. In \S 4.2,
we show that $(g,U)$ is actually a weak entropy solution
to problem \eqref{prob}.

\subsubsection{Compactness of $\{g^\delta\}$}

We first show the compactness of the approximate free
boundary $\{g^\delta\}_{\delta>0}$. More explicitly, we have

\begin{lemma}\label{compactfb}
Let $g^{\delta}(x)$ be the free boundary for the approximate
solution $U^{\delta}(x)$. Then there is a subsequence $\delta_j\to
0$ so that  $g^{\delta_j}(x) \to g(x)$ uniformly in any compact set.
Furthermore, the limit $g(x)$ is Lipschitz continuous:
$|g(x_1)-g(x_2)|\le C_3\varepsilon|x_1-x_2|$
for some constant $C_3$.
\end{lemma}

\begin{proof} By \eqref{gbound}, that is,
$\|(g^\delta)'\|_{L^\infty([0,\infty))} \le C_3\varepsilon$ and
$g^\delta(0)=0$, we see that, for fixed $T>0$, the family $\{g^\delta\}$ is
uniformly bounded and equicontinuous on $[0,T]$. Then, by the Arzela-Ascoli
compactness criterion, there is a subsequence $\delta_j\to 0$ so
that $g^{\delta_j}\rightrightarrows g$ uniformly for some $g$ in
$[0,T]$ and one easily proves that $|g(x_1)-g(x_2)|\le
C_3\varepsilon|x_1-x_2|$ for $x_1,x_2\in[0,T]$. By taking a diagonal
subsequence for $2T,3T,\ldots$, we can prove that $g$ is defined for
$x\in[0,\infty)$ and $g^{\delta_j}\to g$ uniformly in any compact
subset of $[0,\infty)$, and $|g(x_1)-g(x_2)|\le
C_3\varepsilon|x_1-x_2|$ for any finite $x_1$ and $x_2$.
\end{proof}

\subsubsection{Compactness of $\{U^\delta\}$}
We use the following compactness lemma, which is a modification of
Theorem A.8 in \cite{Holden-Risebro2002}.

\begin{lemma}\label{compactness}
Let $\{u_\eta: [0,\infty) \times [0,\infty)\to\mathbb{R}^4\}_{\eta}$
be a family of functions such that, for each positive $T$,
\begin{itemize}
\item[(a)] $|u_\eta(x,\theta)|\le C_T$ for $(x,\theta)\in[0, T]\times[0,\infty)$
with a constant $C_T$ independent of $\eta$;

\item[(b)] For all  $t\in[0,T]$,
there holds
$$
\sup_{|\xi|\le\rho}\int_B|u_\eta(x, \theta+\xi)-u_\eta(x,\theta)|\,\dd \theta\le
\nu_{B,T}(|\rho|),
$$
for a modulus of continuity $\nu$ and all
compact $B \subset [0,\infty)$ (here $u_\eta(x,t)$ is extended to be
zero for $x \notin [0,\infty)$);

\item[(c)] Furthermore, for any $R>0$, for $s$ and $t$ in $[0,T]$, there holds
$$
\int_0^R|u_\eta(t,\theta)-u_\eta(s, \theta)|\,\dd\theta
\le\omega_{T}(|t-s|)\qquad\text{as}\,\,\, \eta\to0,
$$
for some modulus of continuity $\omega_{T}$.
\end{itemize}
Then there exists a sequence $\eta_j\to 0$ such that, for each
$x\in[0,T]$, the function $u_{\eta_j}(x)$ converges to a function
$u(x)$ in $L^1([0,\infty))$. The convergence is in the topology of
$C([0,T];L^1[0,\infty))$.
\end{lemma}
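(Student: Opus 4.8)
The plan is to read this as a vector-valued Arzel\`a--Ascoli theorem in which, for each fixed $x$, the slice $u_\eta(x,\cdot)$ is regarded as an element of $L^1([0,\infty))$, so that the family $\{u_\eta\}$ becomes a set of maps $[0,T]\to L^1([0,\infty))$. In this picture, condition (c) is precisely equicontinuity in the $x$-variable (with values in $L^1$), while conditions (a)--(b) will be used to guarantee that, for each \emph{fixed} $x$, the set $\{u_\eta(x,\cdot)\}_\eta$ is precompact in $L^1$. Combining an equicontinuity-in-$x$ bound with slice-precompactness via a diagonal extraction will then produce a subsequence converging in $C([0,T];L^1[0,\infty))$.

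First I would fix $x\in[0,T]$ and show that $\{u_\eta(x,\cdot)\}_\eta$ is relatively compact in $L^1(B)$ for every compact $B\subset[0,\infty)$. This is exactly the content of the Kolmogorov--Riesz--Fr\'echet criterion: the uniform bound (a) gives boundedness in $L^1(B)$, while the uniform translation estimate (b) provides the equicontinuity of translations that the criterion requires. To promote this from $L^1_{\mathrm{loc}}$ to the full space $L^1([0,\infty))$ one needs tightness at $\theta=\infty$, which is \emph{not} furnished by (a)--(b) as literally stated (the modulus $\nu_{B,T}$ depends on $B$). In our application this tightness comes from the structure of the approximate solutions: $u_\eta = U^\delta-\underline{U}_0$ vanishes for $\theta$ larger than some $M_T$ independent of $\eta$ on $[0,T]$, by the finiteness of the propagation speed, so the uniform support control together with local compactness yields precompactness in $L^1([0,\infty))$.

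Next I would run a diagonal argument. Choose a countable dense set $D\subset[0,T]$ (say the rationals). For each $x\in D$ the slice family is precompact in $L^1([0,\infty))$ by the previous step, so successive extraction and diagonalization produce a single subsequence $\eta_j\to 0$ with $u_{\eta_j}(x,\cdot)$ convergent in $L^1$ for every $x\in D$; write the limit as $u(x,\cdot)$. I would then use (c) to pass from $D$ to all of $[0,T]$: given $x\in[0,T]$ and $\epsilon>0$, pick $s\in D$ with $\omega_T(|x-s|)<\epsilon/3$ and estimate
\[
\|u_{\eta_j}(x)-u_{\eta_k}(x)\|_{L^1}\le \|u_{\eta_j}(x)-u_{\eta_j}(s)\|_{L^1}+\|u_{\eta_j}(s)-u_{\eta_k}(s)\|_{L^1}+\|u_{\eta_k}(s)-u_{\eta_k}(x)\|_{L^1},
\]
where the two outer terms are bounded by $\omega_T(|x-s|)$ via (c) and the middle term is small for $j,k$ large by convergence at $s\in D$. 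Hence $\{u_{\eta_j}(x,\cdot)\}_j$ is Cauchy in $L^1$ for every $x\in[0,T]$, so it converges; the limit map $x\mapsto u(x,\cdot)$ inherits the modulus $\omega_T$ and is therefore continuous.

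Finally I would upgrade pointwise-in-$x$ convergence to convergence in the $C([0,T];L^1)$ norm, again using (c) as equicontinuity: cover $[0,T]$ by finitely many intervals whose radius is controlled through $\omega_T$, force convergence at the finitely many centers, and interpolate. I expect the main obstacle to be the honest treatment of the tail at $\theta=\infty$: because (b) is only a local (compact-$B$) statement, the step from $L^1_{\mathrm{loc}}$ precompactness to genuine $L^1([0,\infty))$ precompactness must be carried by the uniform support/decay of the approximate solutions coming from finite propagation speed, rather than from the hypotheses exactly as written. Once that tightness is in hand, the remaining steps are routine $\epsilon/3$ bookkeeping.
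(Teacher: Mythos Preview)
The paper does not supply its own proof of this lemma; it merely records it as ``a modification of Theorem A.8 in \cite{Holden-Risebro2002}'' and proceeds directly to verify the hypotheses for $\breve{U}^\delta$. Your outline---Kolmogorov--Riesz--Fr\'echet for slice precompactness, diagonal extraction on a countable dense set, extension and uniformization via the modulus $\omega_T$ from (c)---is exactly the standard route used to prove results of this type (and is, in outline, how Theorem A.8 of Holden--Risebro is established), so there is nothing to compare against.

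Your observation about the tail at $\theta=\infty$ is well taken and worth keeping: hypotheses (a)--(b) only yield $L^1_{\mathrm{loc}}$ compactness, and the conclusion as written (convergence in $L^1([0,\infty))$) genuinely requires the additional uniform tightness that, in the paper's application, is supplied by finite propagation speed together with $U_0^\delta(y)=\underline{U}_0$ for large $y$. Indeed the paper applies the lemma to $\breve{U}^\delta$ itself, which is not even in $L^1([0,\infty))$; one should really read the conclusion either as $L^1_{\mathrm{loc}}$ convergence or as $L^1$ convergence of $\breve{U}^\delta-\underline{U}_0$. Your proposal handles this correctly.
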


For any $T>0$, note that $U^\delta(x,y)$ is defined for $0<x<T$ and
$g^\delta(x)<y<\infty$. By introducing $\theta=y-g^\delta(x)$, we
may regard $U^\delta$ as a function of $\theta\in[0,\infty)$  and
$x\in[0,T]$ by defining
$$
\breve{U}^\delta(x,\theta)=U^\delta(x,
\theta + g^{\delta}(x))
$$
to apply Lemma \ref{compactness}.
Obviously
$\norm{\breve{U}^\delta}_{L^\infty}=\norm{U^\delta}_{L^\infty}$, and
$\TV(\breve{U}^\delta)(x,\cdot)=\TV({U}^\delta)(x,\cdot)$.
Then, by
\eqref{linfty}, we see immediately that (a) is valid for
$\{\breve{U}^\delta\}_{\delta>0}$.

Using the boundedness of $L^\infty$ norm and total variation of
$\breve{U}^\delta$ (cf. \eqref{tvestimate}), the verification of (b)
is elementary. Without loss of generality, we assume $\xi>0$. Then
by monotone convergence theorem,
\begin{eqnarray*}
&&\int_{\mathbb{R}^+}|\breve{U}^\delta(x,\theta+\xi)-\breve{U}^\delta(x,\theta)|\,\dd
\theta=\sum_{k=0}^{\infty}\int_{k\xi}^{(k+1)\xi}|\breve{U}^\delta(x,\theta+\xi)
-\breve{U}^\delta(x,\theta)|\,\dd
\theta\\
&&=\int_0^\xi\sum_{k=0}^{\infty}
\big|\breve{U}^\delta(x,z+(k+1)\xi)-\breve{U}^\delta(x,z+k\xi)\big|\,\dd z\\
&&\quad \le (\TV \breve{U}^\delta(x,\cdot))|\xi|\le
(C_2C_0\varepsilon)|\xi|.
\end{eqnarray*}

The verification of (c) is also not difficult. For $0<s<t<T$, we will
prove that
\begin{eqnarray}\label{vc}
\int_{0}^R|\breve{U}^\delta(t,\theta)-\breve{U}^\delta(s,\theta)|\,\dd
\theta\le C(t-s),
\end{eqnarray}
for any $R>0$ and a constant $C$ independent of $\delta, t$, and $s$.

To this end, for given approximate solution $U^\delta$, suppose the
``collision times" are
$$
0<\tau_1<\ldots<\tau_k<\ldots.
$$
Then, for $x\in(\tau_i,\tau_{i+1})$, nothing happens on the (approximate)
free boundary, and then we may ignore the free boundary and write $U^\delta(x,y)$ in
the form
\begin{eqnarray}
U^\delta(x,y)=\sum_{k=1}^{N_i}(U^i_{k+1}-U^i_k)H(y-y^i_k(x))+U^i_{1},
\end{eqnarray}
with $H(\cdot)$ the Heaviside step function (whose value is $0$ for
the negative argument and is $1$ for the positive argument). Here we have
assumed that, for $x\in (\tau_i,\tau_{i+1})$, there are $N_i$
discontinuities  with equation $y=x^i_k(x)$ (from the lower to upper as
$k=1,\ldots, N_i$), and the state in the lower side of $\{y=x^i_k\}$
is $U^i_k$. From \S \ref{sec3.4.2}, we know that $N_i<\infty.$

With the above expression, for $\tau_i< s<t< \tau_{i+1}$, we have
\begin{eqnarray}
&&\int_{\mathbb{R}^+}\big|\breve{U}^\delta(t,\theta)-\breve{U}^\delta(s,\theta)\big|
\,\dd \theta=\int_{\mathbb{R}^+}\left|\int_s^t\frac{\dd}{\dd \tau}
\breve{U}^\delta(\tau,\theta)\,\dd\tau\right|\,\dd \theta\nonumber\\
&&\le \int_{{\mathbb{R}^+}}\int_s^t\sum_{k=1}^{N_i}\big|U^i_{k+1}-U^i_k\big|
\big|H'((g^\delta(\tau) +\theta)-y^i_k(\tau))\big| \left(\left| \frac{\dd
g^\delta(\tau)}{\dd \tau}\right|+ \left|\frac{\dd y^i_k(\tau)}
{\dd \tau}\right|\right)\,\dd\tau\,\dd \theta\nonumber\\
&&\le (L+C_3\varepsilon)
\int_s^t\sum_{k=1}^{N_i}\big|U^i_{k+1}-U^i_k\big|\int_{\mathbb{R}^+}\big|H'((g^\delta(\tau)
+\theta)-y^i_k(\tau))\big|\,\dd
\theta\,\dd\tau\nonumber\\
&&= (L+C_3\varepsilon)
\int_s^t\sum_{k=1}^{N_i}\big|U^i_{k+1}-U^i_k\big|\,\dd\tau\nonumber\\
&&\quad \le (L+C_3\varepsilon)\TV\big(U^\delta(\tau_i+,\cdot)\big)(t-s)\le
\big(L+C_3\varepsilon\big)C_2C_0\varepsilon(t-s).\label{vcc}
\end{eqnarray}
Here we have set
\begin{eqnarray}\label{chspeed}
L=\sup_{U\in
O_\epsilon(\underline{U}^+)}({|\lambda_1(U)|,|\lambda_{2,3}(U)|,
|\lambda_{4}(U)|})
\end{eqnarray}
to be the maximal characteristic
speed, and used the fact that $\left| \frac{\dd y^i_k(x)}{\dd x}\right|\le L$.
Estimate \eqref{gbound} is also used to
control $\big|\frac{\dd g^{\delta}}{\dd \tau}\big|$.

We note \eqref{vcc} also holds for $s=\tau_i$ and/or $t=\tau_{i+1}$.
Then, for $s\in(\tau_i,\tau_{i+1})$ and $t\in (\tau_j,\tau_{j+1})$
with $i<j$, using \eqref{vcc} repeatedly in the intervals
$(s,\tau_{i+1}), (\tau_{i+1},\tau_{i+2}), \ldots,
(\tau_{j-1},\tau_j)$, and $(\tau_j,t)$, we obtain \eqref{vc} with
$C=(L+C_3\varepsilon)C_2C_0\varepsilon$.

Therefore, by Lemma \ref{compactness}, we can find a subsequence
$\{\breve{U}^{\delta_j}\}$ that converges to some $\breve{U}$ under
the metric of $C([0,T]; L^1([0,\infty)))$.
In addition, upon at most a further subsequence, $g^{\delta_j}\to g$. Now set
$U(x,y)=\breve{U}(x,y-g(x))$, which is defined in the domain
$\Omega=\{x>0, y>g(x)\}$, with $D=\{y=g(x)\}$ being the lateral
(free) boundary. In \S 4.2, we show that
$(g,U)$ is actually a weak entropy solution of problem
\eqref{prob}. In the following, for simplification, we also write
$\delta_j$ as $\delta$.

\subsection{Existence of a Weak Entropy Solution}
For $0\le s\le t\le T_0$, define
$\Omega_{s,t}:=\Omega\cap\{x\in[s,t]\}$,
$\Sigma_s=\Omega\cap\{x=s\}$, and $\Gamma_{s,t}=D\cap\{s\le x\le t\}$.
By the
definition of weak entropy solutions (Definition \ref{def11}), a
pair of bounded measurable functions $(g, U)=(g(x), U(x, y))$ is a weak
entropy solution of problem \eqref{prob} provided that
\begin{itemize}
\item  For any $\psi\in C_0^\infty(\mathbb{R}^2)$,
\begin{eqnarray}
F_s^t(U):=\int_{\Omega_{s,t}} \big(\rho u\p_x\psi+\rho
v\p_y\psi\big)\,\dd y\,\dd x+\int_{\Sigma_s}\rho u\psi\,\dd y
-\int_{\Sigma_t}\rho u\psi\,\dd y=0;
\end{eqnarray}

\item For any $\psi\in C_0^\infty(\mathbb{R}^2)$,
\begin{eqnarray}
G_s^t(U)&:=&\int_{\Omega_{s,t}} \big((\rho u^2+p)\p_x\psi+
\rho uv\p_y\psi\big)\,\dd y\,\dd x+\int_{\Sigma_s}(\rho
u^2+p)\psi\,\dd y \nonumber\\
&&-\int_{\Sigma_t} (\rho u^2+p)\psi\,\dd
y-\underline{p}\int_{\Gamma_{s,t}}\psi n_1\,\dd s=0;
\end{eqnarray}

\item For any $\psi\in C_0^\infty(\mathbb{R}^2)$,
\begin{eqnarray}
I_s^t(U)&:=&\int_{\Omega_{s,t}} \big(\rho uv\p_x\psi+
(\rho v^2 + p)\p_y\psi\big)\,\dd y\,\dd x
+\int_{\Sigma_s}\rho uv\psi\,\dd y
-\int_{\Sigma_t} \rho uv\psi\,\dd y\nonumber\\&&
-\underline{p}\int_{\Gamma_{s,t}}\psi n_2\,\dd s=0;
\end{eqnarray}

\item For any $\psi\in C_0^\infty(\mathbb{R}^2)$,
\begin{eqnarray}
J_s^t(U)&:=&\int_{\Omega_{s,t}} \big(\rho u(E+\frac{p}{\rho})\p_x\psi+
\rho v(E+\frac{p}{\rho})\p_y\psi\big)\,\dd y\,\dd x\nonumber\\
&&+\int_{\Sigma_s} \rho u(E+\frac{p}{\rho})\psi\,\dd y
-\int_{\Sigma_t} \rho u(E+\frac{p}{\rho})\psi\,\dd y=0;
\end{eqnarray}

\item For any $\psi\in C_0^\infty(\mathbb{R}^2)$ that is nonnegative,
\begin{eqnarray}
E_s^t(U):=\int_{\Omega_{s,t}}\big( \rho uS\p_x\psi
+\rho vS\p_y\psi)\,\dd y\,\dd x
+\int_{\Sigma_s}\rho uS\psi\,\dd y
-\int_{\Sigma_t} \rho uS\psi\,\dd y\le0.
\end{eqnarray}
\end{itemize}

\subsubsection{Estimate on the Total Strength of the Removed Fronts}
For any approximate solution $(g^\delta,U^\delta)$, we set
$$
\Omega^\delta:=\{x>0, y>g^ \delta\}
$$
and
$$
\Gamma^\delta:=\{y=g^\delta(x)\}.
$$
For $0\le s\le t\le T_0$, define
$$
\Omega^\delta_{s,t}:=\Omega^\delta\cap\{x\in[s,t]\}, \qquad
\Sigma^\delta_s=\Omega^\delta\cap\{x=s\}, \qquad
\Gamma^\delta_{s,t}=\Gamma^\delta\cap\{x\in[s,t]\}.
$$
We note by our
construction of approximate solutions that $U^\delta$ may not be a
weak entropy solution of the Euler equations \eqref{euler} in
$\Omega^\delta$, since there are possible errors introduced by
the approximating rarefaction wave via several fronts,
and removing weak
fronts of higher generations.
In the following we will estimate
these errors and show that they actually vanish as $\delta\to0$.
The analysis is again quite similar to
\cite{Holden-Risebro2002}.
 We first list below Lemma
 6.5 in \cite{Holden-Risebro2002} for later reference.

\begin{lemma}\label{lem42}
Let $\mathcal{G}_m$ denote the set of all fronts of generation $m$,
and let $\mathcal{T}_m$ denote the sum of the strengths of fronts of
generation $m$:
$\mathcal{T}_m=\sum_{\alpha_j\in\mathcal{G}_m}|\alpha_j|$. Then
$T=\sum_{m=1}^N\mathcal{T}_m$, and
$$\mathcal{T}_m\le C(4KT)^m$$
for some constant $C$. In particular, for $m=N+1$, we have
$\mathcal{T}_{N+1}\le C\delta$ (cf. \eqref{eqN}).
\end{lemma}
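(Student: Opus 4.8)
The plan is to prove the bound by induction on the generation $m$, following the genealogical argument of Holden--Risebro \cite{Holden-Risebro2002} but adapted to the present free-boundary, non-strictly-hyperbolic setting. The identity $T=\sum_{m=1}^N\mathcal{T}_m$ is immediate: since every front retained in the construction has generation between $1$ and $N$, partitioning $\Upsilon_x$ according to generation and summing strengths recovers $T$. For the base case $m=1$, the first-generation fronts are exactly those issuing from the Riemann problems posed on $\{x=0\}$ together with the corner free-boundary Riemann problem, so their total strength satisfies $\mathcal{T}_1\le T$; since $4KT<1$ this already gives $\mathcal{T}_1\le C(4KT)$ for suitable $C$.

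For the inductive step I would use the rule, fixed in the construction, that a front of generation $m\ge 2$ is created only at an interaction of two fronts whose generations $i,j$ satisfy $i+j=m$: the surviving same-family fronts retain their generations, and---crucially here---a $4$-front produced by reflection of a $1$-front off the free boundary keeps the generation of the incident front. Hence no interaction with the free boundary raises the generation, and the generation count is governed solely by interior wave--wave collisions. At such a collision of incoming weighted strengths $|b_\alpha|$ and $|b_\beta|$, Lemma \ref{lem23} bounds the total strength of the newly created fronts by $O(1)|b_\alpha b_\beta|$, while the boundedness of the reflection coefficient (Lemma \ref{lem:reflection}, with the weight $k_+>|K_2|$) keeps the weighted strengths consistent through reflections. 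Summing over all collisions that produce generation $m$, and controlling the sum of products of colliding strengths by means of the Glimm interaction potential $Q$ (whose decrease at each collision dominates the new-wave strength produced), yields a recursion of the form
\begin{eqnarray*}
\mathcal{T}_m\le \mathcal{C}\sum_{i+j=m}\mathcal{T}_i\,\mathcal{T}_j,\qquad m\ge 2,\qquad \mathcal{T}_1\le T,
\end{eqnarray*}
where $\mathcal{C}=O(1)$ is the interaction constant.

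To close the induction I would solve this quadratic recursion. Comparing with the generating function $g(z)=\sum_m a_m z^m$ satisfying $g=Tz+\mathcal{C}g^2$, that is $g=\tfrac{1}{2\mathcal{C}}\big(1-\sqrt{1-4\mathcal{C}Tz}\big)$, one reads off $a_m=\mathcal{C}^{\,m-1}\mathrm{Cat}_{m-1}\,T^m$ with the Catalan numbers $\mathrm{Cat}_{m-1}\le 4^{m-1}$; hence $\mathcal{T}_m\le a_m\le \tfrac{1}{4\mathcal{C}}(4\mathcal{C}T)^m$. Since $\varepsilon_0$ is fixed small so that $K=\tfrac{1}{4C_0\varepsilon_0}\ge \mathcal{C}$ and $4KT<1$, monotonicity gives $(4\mathcal{C}T)^m\le(4KT)^m$, so that $\mathcal{T}_m\le C(4KT)^m$ with $C=\tfrac{1}{4\mathcal{C}}$ depending only on $\underline{U}^+$. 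Taking $m=N+1$ and invoking \eqref{eqN}, namely $(4KT)^{N+1}\le\delta$, then yields $\mathcal{T}_{N+1}\le C\delta$. I expect the main obstacle to be the generation-refined bookkeeping behind the displayed recursion: one must verify that the products of colliding strengths, summed over all generation-$i$/generation-$j$ collisions, are genuinely dominated by $\mathcal{T}_i\mathcal{T}_j$ through the decrease of $Q$, and that any extra strength a front picks up between its creation and its next interaction---from the splitting of approximate rarefaction fronts and from reflection off the free boundary---is higher order and absorbable. This is precisely where the constant eigenvalue multiplicity near $\underline{U}^+$ (so that the families remain separated despite non-strict hyperbolicity) and the boundedness of the reflection coefficient $K_2$ are essential.
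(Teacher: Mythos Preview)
Your proposal is correct and is precisely the argument the paper defers to: the paper does not prove this lemma but cites it verbatim as Lemma~6.5 of Holden--Risebro \cite{Holden-Risebro2002}, and what you have written is an accurate sketch of that proof, with the relevant observation that reflections off the free boundary do not raise generation (so the interior bookkeeping of \cite{Holden-Risebro2002} carries over unchanged). Your identification of the delicate step---that the sum of interaction products creating generation $m$ is genuinely controlled by $\sum_{i+j=m}\mathcal{T}_i\mathcal{T}_j$ via the decrease of $Q$---is exactly the point where the full Holden--Risebro argument is needed, and your Catalan-number resolution of the recursion, together with the choice of $\varepsilon_0$ small enough that $K\ge\mathcal{C}$, matches their treatment.
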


\subsubsection{Exact Riemann Solutions}
For a given approximate solution $(g^\delta, U^\delta)$, suppose as
before that the collision/reflection ``times" are
$x=\tau_1<\tau_2<\ldots$. For a fixed interval
$[\tau_j,\tau_{j+1}]$, set $s_1=\tau_j$. We solve the following
initial--free boundary problem with $i=1$ (cf. \eqref{eq21}):
\begin{eqnarray}\label{probconver}
\begin{cases}\p_xW(\tilde{U})+\p_yH(\tilde{U})=0, &x>s_i,\ \
y>\tilde{g}(x),\\
\tilde{U}=U^\delta,& x=s_i,\ \
y>\tilde{g}(s_i):=g^\delta(s_i),\\
\tilde{p}=\underline{p}, & x>s_i,\ \ \text{on }\ \tilde{\Gamma}:=\{y=\tilde{g}(x)\},\\
\tilde{v}=\tilde{g}^{\prime}\tilde{u}, & x>s_i,\ \text{on }\ \
\tilde{\Gamma}.\end{cases}
\end{eqnarray}
Since the ``initial data" $U^\delta(s_1,\cdot)$ is piecewise
constant, the solution $(\tilde{g}_1, \tilde{U}_1)$ is obtained by
solving the Riemann problems. It can be solved up to $x=s_2$ when
two waves interaction or reflection off the free boundary occurs (if
$s_2>\tau_{j+1}$, we set $s_2=\tau_{j+1}$). Then we solve
$(\tilde{g}_2, \tilde{U}_2)$ from problem \eqref{probconver} with
$i=2$ (note that the initial data is $U^\delta(s_2,\cdot)$), up to some
$s_3$. Repeat this process, we obtain
$$
(\tilde{g}_i,\tilde{U}_i)\qquad \mbox{in $[s_i,s_{i+1})$}
$$
with
$\cup_{i=1}^\infty[s_i,s_{i+1})=[\tau_j,\tau_{j+1})$. We can then
define  $(\tilde{g},\tilde{U})$ piecewise in
$x\in[\tau_j,\tau_{j+1})$ by
$(\tilde{g},\tilde{U})=(\tilde{g}_i,\tilde{U}_i)$ for
$x\in[s_i,s_{i+1})$.

\subsubsection{Error of Splitting Rarefaction Waves}
Let $\tilde{U}^\delta$ be the {\it approximate} solution obtained
from problem \eqref{probconver} in $[s_{i},s_{i+1}]$, with the
approximating parameter $\delta$.
This means that the rarefaction waves
in $\tilde{U}$ are separated into many discontinuities; while there
is no front to be removed since each front in $\tilde{U}$ is of
generation one. Also, by our rule of splitting rarefaction waves,
the lowermost state of $\tilde{U}^\delta$ is the same as
$\tilde{U}$. This implies that the corresponding free boundaries are the
same, and both $\tilde{U},\ \tilde{U}^\delta$ are defined in the same
domain. The analysis below is similar to
\cite{Holden-Risebro2002}.
We present details here to
show the ideas there still work for our free boundary problem.

Suppose that there is a rarefaction wave in $\tilde{U}$ with the lower
state $\tilde{U}_l$ and upper state $\tilde{U}_r$. Then this
rarefaction wave is replaced by a step function $\tilde{U}^\delta$.
There also holds
$$
|\tilde{U}^\delta(x,y)-\tilde{U}(x,y)|\le
O(\delta)
$$
by our splitting process (it is zero for the points not in
rarefaction wave fan). We also want to find the error in the $L^1$--space.
To
this end, we note that there are at most
$
\frac{|\tilde{U}_r-\tilde{U}_l|}{O(\delta)}
$
steps, and the width of each
step is at most $(x-s_i)\triangle\lambda$, with $\triangle\lambda$
the difference of characteristic speeds of two adjacent approximate
fronts of each step
--- it is less than $O(\delta)$ (cf. \eqref{eqappr}). Using the mean
value theorem (since we know uniform $L^\infty$ bounds of
$\tilde{U}$ and $\tilde{U}^\delta$), and summing up for all
rarefaction wave fans across $x$, we find
\begin{eqnarray}\label{411}
&&\int_{y>\tilde{g}(x)}|W(\tilde{U}^\delta)(x,y)-W(\tilde{U})(x,y)|\,\dd
y\le
C\int_{y>\tilde{g}(x)}|\tilde{U}^\delta(x,y)-\tilde{U}(x,y)|\,\dd
y\nonumber\\
&&\le O(\delta)\sum_{k} |\tilde{U}_r^k-\tilde{U}_l^k|
|x-s_i|
\le  O(\delta) {\rm{T.V.}}(U^\delta)|x-s_i|=O(\delta)|x-s_i|.
\end{eqnarray}
We note here that $\sum_{k}{|\tilde{U}_r^k-\tilde{U}_l^k|}$ is
actually controlled by the total variation of the initial data by using the property
of the Riemann solution. Similar inequality also holds when $W(U)$ is
replaced by $H(U)$.

\subsubsection{Error of the Removing Weak Fronts}
We then compare $\tilde{U}^\delta$ and $U^\delta$ in
$x\in[s_i,s_{i+1}].$ We note that both $\tilde{U}^\delta$ and
$U^\delta$ satisfy the same initial data. The only difference
between them is that some fronts in $\tilde{U}^\delta$ of generation
$N+1$ are ignored to obtain $U^\delta$. Note that, by the removing fronts
of generation $N+1$, we always keep the lowermost state
the same as before. This means that the free boundary of $U^\delta$ is
the same as $\tilde{U}^\delta$, hence still to be
$y=\tilde{g}(x)=g^\delta(x)$. Consequently, $\tilde{U}^\delta$ is
different from $U^\delta$ in $x\in(s_i,s_{i+1})$ only in a number of
wedges emanating from the discontinuities in $U^\delta(s_i,\cdot)$,
and in each wedge, the difference is bounded by the strength of the
removing fronts $\alpha$ that are of generation $N+1$. We also note
the width of each wedge is controlled by $O(x-s_i)$.  By Lemma
\ref{lem42}, we then find
\begin{eqnarray}\label{412}
&&\int_{y>\tilde{g}(x)}\big|W(\tilde{U}^\delta)(x,y)-W({U}^\delta)(x,y)\big|\,\dd
y\le
C\int_{y>\tilde{g}(x)}\big|\tilde{U}^\delta(x,y)-{U}^\delta(x,y)\big|\,\dd
y\nonumber\\
&&\le O(|x-s_i|) \sum_{\alpha\in\mathcal{G}_{N+1}}|\alpha|
 \le O(\delta)|x-s_i| .
\end{eqnarray}
Similar inequality is also true for $H(U)$.

\subsubsection{Total Error of Approximate Solutions}

Since $\tilde{U}$ is obtained by the exact Riemann solvers for
$x\in[s_i,s_{i+1}]$, there must hold (with $\Omega_{s,t}$ and
$\Sigma_s$, and $\Gamma_{s,t}$ in the integrals replaced by
$\Omega^\delta_{s,t}$, $\Sigma^\delta_s$, and $\Gamma_{s,t}^\delta$
respectively, since we have shown that the free boundary of
$\tilde{U}$ is the same as $U^\delta$):
$$
F_{s_i}^{s_{i+1}}(\tilde{U})=0,\quad G_{s_i}^{s_{i+1}}(\tilde{U})=0,
\quad I_{s_i}^{s_{i+1}}(\tilde{U})=0, \quad
J_{s_i}^{s_{i+1}}(\tilde{U})=0, \quad
E_{s_i}^{s_{i+1}}(\tilde{U})\le0.
$$
From \eqref{411} and
\eqref{412}, we also obtain that, for any $x\in[s_i,s_{i+1}]$,
\begin{eqnarray}
&&\int_{\Sigma^\delta_x}\big|W({U}^\delta)(x,y)-W(\tilde{U})(x,y)\big|\,\dd y\le O(\delta) |x-s_i|,\\
&&\int_{\Sigma^\delta_x}\big|H({U}^\delta)(x,y)-H(\tilde{U})(x,y)\big|\,\dd
y\le O(\delta)|x-s_i|.
\end{eqnarray}
Therefore, as an example, we find (note that the boundary term involving the
pressure $\underline{p}\int_{\Gamma}\psi n_1\,\dd s$ canceled because the
boundary is the same):
\begin{eqnarray*}
\big|G_{s_i}^{s_{i+1}}(U^\delta)\big|
&=&\big|G_{s_i}^{s_{i+1}}(U^\delta)-G_{s_i}^{s_{i+1}}(\tilde{U})\big|
\\&\le&
\int_{s_i}^{s_{i+1}}\int_{\Sigma^\delta_x}\big|\big(W_2(U^\delta)-W_2(\tilde{U})\big)\p_x\phi
+\big(H_2(U^\delta)-H_2(\tilde{U})\big)\p_y\phi\big|\,\dd y\dd
x\\&&+\int_{\Sigma^\delta_{s_i}}\big|\big(W_2(U^\delta)-W_2(\tilde{U})\big)\phi\big|\,\dd
y+\int_{\Sigma^\delta_{s_{i+1}}}\big|(W_2(U^\delta)-W_2(\tilde{U}))\phi\big|\,\dd y\\
&\le&
M\int_{s_i}^{s_{i+1}}\int_{\Sigma^\delta_x}\big|W_2(U^\delta)-W_2(\tilde{U})\big|\,\dd
y\,\dd
x+M\int_{s_i}^{s_{i+1}}\int_{\Sigma^\delta_x}\big|H_2(U^\delta)-H_2(\tilde{U})\big|\,\dd
y\,\dd
x\\
&&+M\int_{\Sigma^\delta_{s_i}}\big|W_2(U^\delta)-W_2(\tilde{U})\big|\,\dd
y+M\int_{\Sigma^\delta_{s_{i+1}}}\big|W_2(U^\delta)-W_2(\tilde{U})\big|\,\dd y\\
&\le& {O}(\delta)(s_{i+1}-s_i)^2+{O}(\delta)(s_{i+1}-s_i),
\end{eqnarray*}
where $M:=\norm{\phi}_{W^{1,\infty}}$. Then we find
$$
\big|G_{\tau_j}^{\tau_{j+1}}(U^\delta)\big|
\le\mathcal{O}(\delta)\sum_{i=1}^\infty\big((
s_{i+1}-s_i)^2+(s_{i+1}-s_i)\big)\le
O(\delta)\big((\tau_{j+1}-\tau_j)^2+(\tau_{j+1}-\tau_j)\big).
$$
Thus it is
clear that
$$
\big|G_{s}^{t}(U^\delta)\big|\le O(\delta)(|t-s|^2+|t-s|) \qquad\mbox{for
any $0\le s<t\le \infty$},
$$
and
$$
\lim_{\delta\to 0}G_{s}^{t}(U^\delta)=0.
$$
We now need to prove
$$
\lim_{\delta\to 0}G_{s}^{t}(U^\delta)=G_{s}^{t}(U).
$$

\subsubsection{Verification of Weak Entropy Solutions}
Set
$$
\breve{\phi}^\delta(x,\theta)=\phi(x, \theta+g^{\delta}(x)),\quad
\breve{W}^{\delta}_2(x,\theta)=
W_2(U^{\delta})(x,\theta+g^\delta(x)),\quad
\breve{H}^{\delta}_2(x,\theta)=
H_2(U^{\delta})(x,\theta+g^\delta(x)),
$$
where $W^{\delta}_2=
W_2(U^{\delta})$ and $H^{\delta}_2= H_2(U^{\delta})$.
Then we have
\begin{eqnarray*}
G_{s}^{t}(U^\delta)&=&\int_s^t\int_{y>g^\delta(x)}
\big(W_2^{\delta}\p_x\phi + H_2^{\delta}\p_y\phi\big) \,\dd y\dd x\\
&&+\int_{y>g^\delta(s)}\big(W_2^{\delta}\phi\big)|_{x=s}\,\dd
y-\int_{y>g^\delta(t)}
\big(W_2^{\delta}\phi\big)|_{x=t}\,\dd y-\underline{p}\int_{\Gamma^\delta_{s,t}}
\phi n_1\,\dd s \\
&=&\int_s^t\int_{\mathbb{R}^+}\big(\breve{W}_2^{\delta}(\p_x\breve{\phi}^\delta-
\p_\theta\breve{\phi}^\delta (g^\delta)')+\breve{H}_2^{\delta}\p_\theta
\breve{\phi}^\delta\big)\,\dd \theta\dd x\\
&&+\int_{\mathbb{R}^+}
\big(\breve{W}_2^{\delta}\breve{\phi}^\delta\big)|_{x=s}\,\dd\theta-
\int_{\mathbb{R}^+}
\big(\breve{W}_2^{\delta}\breve{\phi}^\delta\big)|_{x=t}\,\dd\theta\\
&&-\underline{p}\int_s^t\phi(x,g^\delta(x))\frac{(g^\delta)'(x)}{\sqrt{1+((g^\delta)'(x))^2}}\sqrt{1+((g^\delta)'(x))^2}\,\dd
x.
\end{eqnarray*}

By Lemma \ref{compactfb}, we know that $g^\delta\to g$ uniformly for
$x\in[s,t]$. Since $\breve{U}^\delta$ is uniformly bounded and
converges to $\breve{U}$ under the metric of
$C([s,t];L^1(\mathbb{R}^+))$, then $\breve{W}^\delta$ and
$\breve{H}^\delta$ are also uniformly bounded and converges to
$\breve{W}$ and $\breve{H}$ respectively in the topology of
$C([s,t];L^1(\mathbb{R}^+))$.
From these facts, one can easily use
the Lebesgue dominant convergence theorem to show (with
$\breve{\phi}=\phi(x,\theta+g(x))$) that, as $\delta\to 0$,
\begin{eqnarray*}
&&\int_{\mathbb{R}^+}
\big(\breve{W}_2^{\delta}\breve{\phi}^\delta\big)|_{x=s}\,\dd\theta-
\int_{\mathbb{R}^+}
\big(\breve{W}_2^{\delta}\breve{\phi}^\delta\big)|_{x=t}\,\dd\theta\to\int_{\mathbb{R}^+}
\big(\breve{W}_2\breve{\phi}\}|_{x=s}\,\dd\theta-\int_{\mathbb{R}^+}
\big(\breve{W}_2\breve{\phi}\big)|_{x=t}\,\dd\theta,\\
&&\int_s^t\int_{\mathbb{R}^+}\big(\breve{W}_2^{\delta}\p_x\breve{\phi}^\delta+\breve{H}_2^{\delta}\p_\theta
\breve{\phi}^\delta\big)\,\dd \theta\dd
x\to\int_s^t\int_{\mathbb{R}^+}\big(\breve{W}_2\p_x\breve{\phi}+
\breve{H}_2\p_\theta \breve{\phi}\big)\,\dd \theta\dd x.
\end{eqnarray*}
Since $\{(g^\delta)'\}$ is uniformly bounded, we may assume that
$(g^\delta)'\rightharpoonup h$ in the weak* topology of
$L^\infty(\mathbb{R}^+)$.
Since $g^\delta\to g$ uniformly in
$[s,t]$, we find that $(g^\delta)'\to g'$ in the sense of distributions.
Thus, we must have $h=g'$. Therefore, as $\phi(x,g^\delta(x))\to \phi(x,g(x))$
uniformly in $[s,t]$, and $(g^\delta)'\rightharpoonup   g'$ in the weak* $L^\infty$,
we have
\begin{eqnarray*}
\int_s^t\phi(x,g^\delta)(g^\delta)'(x)\,\dd
x\to\int_s^t\phi(x,g) g'(x)\,\dd x.
\end{eqnarray*}
We also find
\begin{eqnarray*}
&&\int_{s}^t\int_{\mathbb{R}^+}\big(\breve{W}_2^\delta\p_\theta\breve{\phi}^\delta(g^\delta)'
-\breve{W}_2\p_\theta\breve{\phi}g'\big)\,\dd\theta\dd x\\
&&=\int_{s}^t\int_{\mathbb{R}^+}\breve{W}_2^\delta\big(\p_\theta\breve{\phi}
^\delta-\p_\theta\breve{\phi}\big)(g^\delta)'\,\dd\theta\dd x+
\int_{s}^t\int_{\mathbb{R}^+}\breve{W}_2^\delta\p_\theta\breve{\phi}\big(
(g^\delta)'-g'\big)\,\dd\theta\dd x\\
&&\quad +\int_{s}^t\int_{\mathbb{R}^+}(\breve{W}_2^\delta-\breve{W}_2)
\p_\theta\breve{\phi}g'(x) \,\dd\theta\dd x.
\end{eqnarray*}
Using the boundedness of $\{\breve{W}_2^\delta\}$ and $\{(g^\delta)'\}$,
and the uniform convergence $\p_\theta\breve{\phi}^\delta\to
\p_\theta\breve{\phi}$, the first integral in the right-hand side
goes to zero as $\delta\to0$.
The third one converges to zero
follows directly from $\breve{W}_2^\delta\to \breve{W}_2$ in
$C([s,t];L^1(\mathbb{R}^+))$.
For the second integral, it can
be written as
\begin{eqnarray*}
\int_{s}^t\int_{\mathbb{R}^+}\big(\breve{W}_2^\delta-\breve{W}_2\big)\p_\theta\breve{\phi}
\big((g^\delta)'-g'\big)\,\dd\theta\dd
x+\int_{s}^t\int_{\mathbb{R}^+}\breve{W}_2\p_\theta\breve{\phi}\big(
(g^\delta)'-g'\big)\,\dd\theta\dd x.
\end{eqnarray*}
By the boundedness of $(g^\delta)'-g'$, the first one then converges to
zero; for the second one, using again $(g^\delta)'\rightharpoonup
g'$ in the weak* topology of $L^\infty$. Then we have also proved
\begin{eqnarray*}
&&\int_{s}^t\int_{\mathbb{R}^+}\breve{W}_2^\delta\p_\theta\breve{\phi}^\delta(g^\delta)'
\,\dd\theta\dd x \to\int_{s}^t\int_{\mathbb{R}^+}
\breve{W}_2\p_\theta\breve{\phi}g'\,\dd\theta\dd x.
\end{eqnarray*}
Hence, we have
\begin{eqnarray*}
\lim_{\delta\to 0}G_{s}^{t}(U^\delta)&=&\int_s^t\int_{\mathbb{R}^+}
\big(\breve{W}_2(\p_x\breve{\phi}-\p_\theta\breve{\phi}g') + \breve{H}_2
\p_\theta\breve{\phi}\big) \,\dd \theta\dd x\\
&&+\int_{\mathbb{R}^+}
\big(\breve{W}_2\breve{\phi}\big)|_{x=s}\,\dd \theta
-\int_{\mathbb{R}^+}\big(\breve{W}_2\breve{\phi}\big)|_{x=t}\,\dd \theta
-\underline{p}\int_s^t\phi(x,g(x))g'(x)\,\dd x \\
&=&G_s^t(U)
\end{eqnarray*}
by a change of variables $(x,y)=(x,\theta+g(x))$.

Therefore, we have proved $G_s^t(U)=0$ as desired.
Similarly, we can
conclude
$$
E_s^t(U)\le0, \qquad F_s^t(U)=0, \qquad I_s^t(U)=0, \qquad J_s^t(U)=0,
$$
and hence the limit $(g, U)$ obtained from the approximate solutions
$(g^\delta,U^\delta)$ is actually a weak entropy solution to problem
\eqref{prob}.

It is clear that $g$ should satisfy the estimate listed in
Theorem \ref{thm1}, as guaranteed by Lemma \ref{compactfb}.
To show
$\norm{(U-\underline{U}^+)(x,\cdot)}_{\rm BV}\le C\varepsilon$, we
note that we have proved
$$
\big\|(U^\delta-\underline{U}^+)(x,\cdot)\|_{\rm BV}\le
C\varepsilon.
$$
Then, by Helly's theorem, without loss of generality, we
may assume
$$
(U^\delta-\underline{U}^+)(x,\cdot)\to
(\tilde{U}-\underline{U}^+)(x,\cdot) \qquad \mbox{pointwise}
$$
for some
$(\tilde{U}-\underline{U}^+)(x,\cdot)$ so that
$$
\big\|(\tilde{U}-\underline{U}^+)(x,\cdot)\big\|_{\rm BV}\le
C\varepsilon \qquad\mbox{as $\delta\to 0$}.
$$
However, by uniqueness of the pointwise
limit, we must have $\tilde{U}=U$.
This completes the proof.

\section{Asymptotic Behavior of Weak Entropy Solutions}
Finally we discuss the asymptotic behavior of the weak
entropy solution $(g,U)$ as $x\to\infty$.

For any given $\delta>0$ and the corresponding approximate solution
$(g^\delta,U^\delta)$, we
know that there are a finite number of fronts and collisions/reflections.
Thus, there exists $x_\delta>0$ so that, for $x>x_\delta$, there is no
collisions and reflections.
Suppose then that there are $m+1$
different states $\{U^\delta_j\}_{j=0}^m$ from the upper to lower. It is
obvious that $U^\delta_0=\underline{U}_0$ (cf. Remark
\ref{remark1}), and there is $m_0$ with $1\le m_0<m$ so that each
pair $(U^\delta_{j-1}, U^\delta_j)$ ($j=1,\ldots, m_0$) is connected
by a discontinuity of the first characteristic family, while each
$(U^\delta_{j-1}, U^\delta_j)$, $j=m_0+1,\ldots, m$, is connected by
a characteristic discontinuity (of the second and/or third characteristic
family). Since no fronts interact, it is only possible that, for
$m_0>1$, all the discontinuities of the first family must be
rarefaction waves; for $m_0=1$, this discontinuity might be a shock
or a rarefaction wave. For states $U^\delta_{j}$ ($j=m_0+1,\ldots,
m$), the pressure must be $\underline{p}$ by the boundary condition
and the Rankine--Hugoniot jump conditions of characteristic discontinuities.

Now we solve the free boundary Riemann problem of the Euler equations
\eqref{euler} with the initial data $U=\underline{U}_0$ and the boundary
condition $p=\underline{p}$ on the free boundary $y=kx$. Suppose that the
solution is given by $U_\infty=\Psi_4(\alpha;\underline{U}_0)$. Then
$k=\frac{v_\infty}{u_\infty}$, and the resulting 4-wave is a shock if
$\underline{p}>\underline{p}_0$, and a rarefaction wave if
$\underline{p}<\underline{p}_0$. It is clear that both
$\frac{v^\delta_j}{u^\delta_j}$ ($j=m_0+1,\ldots, m$) and
$(g^\delta)'$ should be $\frac{v_\infty}{u_\infty}$ for all $x>x_\delta$.

We now define $p_\delta(\xi,\theta)=p^\delta(\xi+x_\delta+1,
\theta+g^\delta(\xi+x_\delta))$ for $\xi\ge0$ and $\theta\ge0$. It
is easy to see that $|p_\delta(\xi,\cdot)|$  and
${\rm{T.V.}}(p_\delta)(\xi,\cdot)=|\underline{p}_0-\underline{p}|$
are bounded for all $\delta>0$ and  given $\xi$. Thus, by Helly's
theorem, there is a subsequence (still denoted as $\delta$) so that
$\lim_{\delta\to 0}p_\delta(\xi,\theta)=p(\theta)=\underline{p}$ for
$\theta>0$ pointwise. This should imply that, for a.e. $\theta\ge0$
and the weak entropy solution $U=(u,v,p,\rho)$,
\begin{eqnarray*}
\lim_{x\to\infty}p(x,\theta+g(x))=\underline{p}.
\end{eqnarray*}
Similarly, we have
\begin{eqnarray*}
\lim_{x\to\infty}\frac{v}{u}(x,\theta+g(x))=\frac{v_\infty}{u_\infty}.
\end{eqnarray*}
It then follows from $g'=\frac{v}{u}$ that
\begin{eqnarray*}
\lim_{x\to\infty}g'(x)=\frac{v_\infty}{u_\infty}.
\end{eqnarray*}

\medskip
\bigskip

{\bf Acknowledgments.} The research of
Gui-Qiang Chen was supported in part by the National Science
Foundation under Grant DMS-0807551, the UK EPSRC Science and Innovation
award to the Oxford Centre for Nonlinear PDE (EP/E035027/1),
the NSFC under a joint project Grant 10728101, and
the Royal Society--Wolfson Research Merit Award (UK).
Vaibhav Kukreja was supported in part by the National Science
Foundation under Grant DMS-0807551, the UK EPSRC Science and Innovation
award to the Oxford Centre for Nonlinear PDE (EP/E035027/1).
Hairong Yuan is supported in part by National
Natural Science Foundation of China under Grant No. 10901052, China
Scholarship Council (No. 2010831365), Chenguang Program (No. 09CG20)
sponsored by Shanghai Municipal Education Commission and Shanghai
Educational Development Foundation, and a Fundamental Research Funds
for the Central Universities.

%%%----------------------------------------------------------------------------


\begin{thebibliography}{99}
\bibitem{Bae-2011}
Bae, M., Stability of contact discontinuity for steady
Euler system. Preprint, arxiv:1103.4583, 2011.


\bibitem{Bressan2000}
Bressan, A., {\it Hyperbolic Systems of Conservation Laws: The
One-Dimensional Cauchy Problem}.
Oxford University Press: Oxford, 2000.

\bibitem{Chen-Zhang-Zhu-2006}
Chen, G.-Q.; Zhang, Y.-Q.; Zhu, D.-W., Stability of
compressible vortex sheets in steady supersonic Euler flows over
Lipschitz walls. SIAM J. Math. Anal. {\bf 38} (2006/07), 1660--1693.

\bibitem{Chen-Chen-Feldman2007}
Chen, G.-Q.; Chen, J.; Feldman, M., Transonic shocks and
free boundary problems for the full Euler equations in infinite
nozzles. J. Math. Pures Appl. (9), {\bf 88} (2007), 191--218.


\bibitem{Chen-Kukreja}
Chen, G.-Q.; Kukreja, V.,
$L^1$-stability of vortex sheets and entropy waves in steady compressible
supersonic Euler flows over Lipschitz walls,
Preprint, arXiv:1205.4429, 2012.



\bibitem{Chen2006}
Chen, S.-X., Stability of a Mach configuration. Comm. Pure Appl.
Math. {\bf 59} (2006), 1--35.

\bibitem{Chen-Fang-2008}
Chen, S.-X.; Fang, B., Stability of reflection and refraction of shocks
on interface. {J. Differential Equations} {\bf 244} (2008), no. 8,
1946--1984.


\bibitem{Da}
Dafermos, C.~M., {\em Hyperbolic Conservation Laws in Continuum
Physics}. Third edition.
Springer-Verlag: Berlin-Heidelberg, 2010.

\bibitem{Fang-Wang-Yuan2011}
Fang, B.; Wang, Y.-G.; Yuan, H., Reflection and
refraction of shocks on an interface with a reflected rarefaction
wave. J. Math. Phys. {\bf 52} (2011), 073702, 14 pages.

\bibitem{Glimm}
Glimm, J., Solutions in the large for nonlinear hyperbolic systems of
equations. Comm. Pure Appl. Math. {\bf 18} (1965), 697--715.



\bibitem{Holden-Risebro2002}
Holden, H.; Risebro, N.~H., {\em Front Tracking for Hyperbolic
Conservation Laws}.
Springer-Verlag: New York, 2002.

\bibitem{Liu-Yuan2008}
Liu, L.; Yuan, H., Stability of cylindrical transonic shocks
for the two-dimensional steady compressible Euler system. J.
Hyperbolic Differ. Equ. {\bf 5} (2008), 347--379.


\bibitem{Xin-Yan-Yin2009}
Xin, Z.; Yan, W.; Yin, H., Transonic shock problem for
the Euler system in a nozzle. Arch. Ration. Mech. Anal. {\bf 194} (2009),
1--47.

\bibitem{Yuan2006}
Yuan, H., On transonic shocks in two-dimensional variable-area
ducts for steady Euler system. SIAM J. Math. Anal. {\bf 38} (2006),
1343--1370.

\bibitem{ZhangYQ}
Zhang, Y., Steady Supersonic flow over bending wall.
Nonlinear Analysis: Real World Applications, {\bf 12} (2011), 167--189


\end{thebibliography}
\end{document}